\documentclass[10pt,a4paper]{article}
\usepackage{amsmath}                    
\usepackage{amssymb}                    
\usepackage{amsthm}                     
\usepackage{amsfonts}                   
\usepackage{subcaption}
\usepackage[usenames, dvipsnames]{color}
\usepackage{graphics,graphicx}
\usepackage{enumerate}
\usepackage{multirow}
\usepackage{algorithm}
\usepackage{algpseudocode}
\usepackage{dsfont}
\usepackage{float}
\usepackage{mathtools}
\usepackage{bbm,amsbsy}
\usepackage[hidelinks]{hyperref}
\usepackage{lipsum}

\allowdisplaybreaks

\newtheorem{theorem}{Theorem}[section]

\newtheorem{proposition}[theorem]{Proposition}
\newtheorem{lemma}[theorem]{Lemma}
\newtheorem{remark}[theorem]{Remark}

\renewcommand{\epsilon}{\varepsilon}

\newcommand{\R}{\mathbb{R}}
\newcommand{\bigo}{\mathcal{O}}

\def \IC{\mathbb C}

\def \calA{\mathcal{A}}

\def \calJ{\mathcal{J}}

\newcommand{\calS}{\mathcal{S}}

\renewcommand{\R}{\mathbb{R}}

\newcommand{\norm}[1]{\left\lVert #1 \right\rVert}

\usepackage[symbol]{footmisc}

\title{
	Explicit stabilized implementation of singly diagonally implicit Runge-Kutta methods
}

\author{ 
	Ibrahim Almuslimani\footnotemark[1], Gilles Vilmart\footnotemark[2], and Konstantinos Zygalakis\footnotemark[3]
}

\begin{document}

\maketitle
\footnotetext[1]{\'Ecole Polytechnique F\'ed\'erale de Lausanne (EPFL), Swiss Plasma Center (SPC), CH-1015 Lausanne, Switzerland. Ibrahim.Almuslimani@epfl.ch}
\footnotetext[2]{Section de Math\'ematiques, Universit\'e de Gen\`eve, CP 64, 1211 Gen\`eve 4, Switzerland. Gilles.Vilmart@unige.ch}
\footnotetext[3]{School of Mathematics and the Maxwell Institute for Mathematical Sciences, University of Edinburgh, Edinburgh, EH9 3FD, UK. K.Zygalakis@ed.ac.uk}
\begin{abstract}
Implicit methods are a natural approach for the integration of stiff differential equations, to avoid time-step restrictions faced by standard explicit integrators. Explicit stabilised integrators are an alternative to implicit methods, which can be particularly efficient in high-dimensional applications with diffusive terms. Towards the best of both worlds, we introduce a new explicit stabilised implementation of a class of diagonally implicit Runge-Kutta methods. This allows us to implement high-order singly-diagonally-implicit Runge-Kutta methods for advection-diffusion-reaction PDEs with a provable computational cost analogous to that of standard explicit stabilised methods. The main ingredient is to recast the implicit Runge–Kutta update as the steady state of a modified auxiliary system, which is then computed using a partitioned Runge–Kutta–Chebyshev method inspired by optimisation techniques.

\smallskip
\noindent
{\it Keywords:\,}
diagonally implicit Runge-Kutta methods, explicit stabilised methods, partitioned Runge-Kutta methods, stiff problems, advection-diffusion-reaction problems.
\smallskip

\noindent
{\it AMS subject classification (2010):\,}
65L20, 65M12, 65M20
\end{abstract}

 \section{Introduction}

In this paper, we consider systems of ordinary differential equations (ODEs) representing space discretisations of partial differential equations (PDEs) of the form
\begin{subequations}\label{eq:main}
\begin{eqnarray}
    \dot{y} &=& F(y), \\
    F(y) &:=& F_{D}(y)+F_{A}(y)+F_{R}(y).
\end{eqnarray}
\end{subequations}
where $F_{D}(y),F_{A}(y),F_{R}(y) \in \mathbb{R}^{d}$ represent diffusion, advection and reaction terms respectively. 
Typically \eqref{eq:main} arises from the spatial discretisation of an advection-diffusion-reaction problem in $N$ spatial dimensions, and assuming a spatial grid of size $\Delta x$, the dimension of the ODE grows like $d =\mathcal{O}((\Delta x)^{-N})$. 

The numerical time integration of \eqref{eq:main} is very challenging since this is a \emph{stiff} ODE. {Following \cite{HaW96}}, we call a differential equation \emph{stiff} when
{a standard explicit numerical integrator such as a standard explicit Runge-Kutta method faces a severe time-step restriction}.  In particular, in the case of \eqref{eq:main} the eigenvalues of the Jacobian of $F_{D}$ are typically distributed along the negative real axis in an interval that grows as $[-\mathcal{O}(\Delta x^{-2}),0]$ (for a symmetric diffusion operator), which implies that the time step $\Delta t$ that can be used by the explicit Euler method {suffers from the CFL} $\Delta t \leq C \Delta x^{2}$. Furthermore, the eigenvalues of the Jacobian of $F_{A}$ are typically distributed along the {imaginary} axis in an interval that grows as  $[- i\mathcal{O}(\Delta x^{-1}), i \mathcal{O}(\Delta x^{-1})]$, while the eigenvalues of the Jacobian of the reaction term $F_{R}$ even though are usually not related to $\Delta x$ can have a ratio $\max_{j}|\mathcal{R} \lambda_{j}|/\min_{j}|\mathcal{R} \lambda_{j}|$ that can be very large or vary over several orders of magnitude {when modeling multiscale reaction terms}.



There are two different approaches to dealing with the issue of stiffness within the class of Runge-Kutta methods. The first relates to the design of implicit methods, with a prime example being the implicit Euler method. These methods have the 
{advantage of achieving stability unconditionally with respect to the stepsize,} 
which comes with the cost of solving a system of nonlinear equations.  When the dimension of the corresponding differential equation is low, the nonlinear system can be solved {efficiently using Newton's method \cite{NW06}}. As the dimension of the differential equation increases, diagonally implicit Runge-Kutta methods are attractive as they lead to a triangular system which is easier to solve using lower-dimensional Jacobian matrices using an appropriate partitioning of the coefficient matrix as proposed in \cite{VSC92}. Nevertheless, when one deals with differential equations like \eqref{eq:main}, dealing with the corresponding nonlinear equations (even in the triangular form) can be cumbersome as it requires specialised linear algebra techniques, {where the use of preconditioners raises issues with both memory access and storage on parallel supercomputers.}

An alternative approach to using implicit methods for solving \emph{stiff} differential equations is to use explicit stabilised methods \cite{Abdrev}. In their basic form, these are first-order explicit methods that can take larger time steps than the standard Euler method (for an easily computable numerical cost). This property together with the simplicity of implementation has made explicit stabilised methods the method of choice for diffusion-dominated problems. In particular, the second-order Runge Kutta Chebyshev (RKC) method was very successful for large dimensional parabolic problems, while methods with nearly optimal extended stability domain sizes were proposed as the second-order ROCK2 method \cite{AbM01} and the fourth-order ROCK4 method \cite{Abd02}. In the spirit of implicit-explicit methods {(IMEX) \cite{ARW95,ARW97}}, the versatility of explicit stabilised methods allowed many extensions to partitioned Runge-Kutta methods for addressing problems with additional advection and possibly stiff reaction terms. Examples include {Implicit Runge Kutta Chebyshev (IRKC)  \cite{SSV06}} to deal with stiff reaction terms and Partitioned Runge Kutta Chebyshev (PRKC) \cite{Zb11} to include advection terms. In addition, the Partitioned Implicit Orthogonal Runge Kutta Chebyshev method  (PIROCK) \cite{AV13} combined these two approaches to propose a second-order method for solving advection-diffusion reaction equations.  
In particular, PIROCK has recently emerged as an efficient alternative to classical splitting approaches for solar atmosphere simulations in astrophysics \cite{WVMP24} and for large-scale gyrokinetic simulations in plasma physics \cite{AUB26,AOUB26}.
Another recent use of explicit stabilised methods is for solving optimisation problems. More precisely, in  \cite{EVVZ21} by exploiting the good stability properties of explicit stabilised methods a new method called the \emph{Runge–Kutta Chebyshev} descent (RKCD) was proposed and showed to perform similarly to the state of the art for strongly convex optimisation problems.

A major limitation of methods such as IRKC, PRKC, and PIROCK that use an explicit stabilised method to deal with the $F_{D}$ term in \eqref{eq:main}, is that they {are} at most second-order accurate in $\Delta t$. {Indeed, while the construction of the PIROCK method which is based on a splitting into three terms needed to solve 12 order conditions, similar construction of higher order methods would be cumbersome due to the dramatic increase of order conditions for partitioned Runge-Kutta methods. }
On the other hand, implicit methods can achieve 
much higher order of accuracy in $\Delta t$, 
but as discussed above the implementation as 
well as the storage requirements of {the Newton 
method and variants} in high dimensions become an issue.  
In this paper, inspired by the success of RKCD 
as an optimisation method, we revisit diagonally implicit 
methods and their implementation. In 
particular, instead of solving the nonlinear 
system with variants of Newton's method, we 
introduce a new variant of the RKCD method 
that only {outputs} first-order information.  This method is simple to implement due to its explicit nature which circumvents the 
need for specialised linear algebra tools.
Furthermore,  it leads to rigorous estimates of the computational complexity of the proposed method. In particular, we can show that
\begin{enumerate}
\item In the pure linear diffusion case ($F_{A}=F_{R}=0$ in \eqref{eq:main}), { we prove that one time step} of an {SDIRK} method\footnote{ Here $\Delta t$ is the time step used by the {SDIRK} method.} (of arbitrary order) by an iterative method requires  $\mathcal{O}(\sqrt{\Delta t}/\Delta x)$ evaluations of $F_{D}$ avoiding the standard CFL condition of standard explicit Runge Kutta methods. This essentially implies that we are able to implement a higher order diagonally implicit Runge-Kutta method at the cost of an explicit stabilised method. 
\item {For a class of linear advection-diffusion PDE ($F_{R}=0$), we prove that in spite of relatively large Peclet numbers the proposed partitioning allows to implement one step of an SDIRK method with a similar complexity to the one of the pure diffusion case discussed at the previous point}.
\item {For various nonlinear diffusion-advection-reaction problems, numerical experiments illustrate the robustness of the approach where we show that the explicit stabilised iterations converge in the case of an order 4 SDIRK method with error control, a state-of-the-art Runge-Kutta method proposed in \cite[Table IV.6.5, p. 100]{HaW96} and considered in this paper to illustrate the new approach.}
\end{enumerate}

The paper is organized as follows. In Section \ref{sec:pre} we revisit the ideas of numerical stability of Runge-Kutta methods, while also discussing the basics of explicit stabilised methods. In Section \ref{sec:main} we revisit implicit Runge-Kutta methods and discuss the main idea behind solving the nonlinear system using an explicit stabilised algorithm. In addition, we discuss a specific application of the new explicit implementation of implicit methods in the context of advection-diffusion-reaction equations and present an algorithm for doing so. {A detailed convergence analysis {in the case of pure linear diffusion and linear diffusion-advection} is presented in Section \ref{sec:conv}}. 
Finally, in Section \ref{sec:num} we present a number of numerical computations that illustrate the theoretical convergence analysis in the pure linear diffusion case, and the linear diffusion-advection case. {In addition, applying the new explicit stabilised implementation exSDIRK4 of an order 4 SDIRK method to several nonlinear advection-diffusion-reaction problems illustrates the wide applicability and versatility of our approach. }



        \section{Preliminaries}
\label{sec:pre}
In this paper, we are interested in the numerical integration of systems of ODEs of the form 
\begin{equation}\label{eq:ode}
		\dot y(t)=g(y(t)),\qquad y(0)=y_0, \quad 
t>0	\end{equation}
where $y(t) \in \mathbb{R}^{d}$ and the vector field $g: \mathbb{R}^{d} \mapsto \mathbb{R}^{d}$ is assumed smooth.  In this section, we recall the main tools for stability and accuracy analysis of Runge-Kutta methods.


        \subsection{Stability function and stability domain}
        
        Let $\lambda\in \IC $ and consider the Dahlquist test equation
        \begin{equation}\label{eq:testode}
            \dot y(t) = \lambda y(t), \quad y(0)=y_0\in \R.
        \end{equation}
        In the case where $\lambda \in \IC^-=\{z \in \IC;\,\text{Re}(z) \leq 0\}$, equation \eqref{eq:testode} has a bounded solution for  $t \rightarrow +\infty$. The simplest Runge-Kutta method for solving \eqref{eq:ode} is the explicit Euler method which when applied to \eqref{eq:testode} yields the recurrence
        \begin{equation}
            y_{n+1}=(1+\lambda \Delta t)y_{n}
        \end{equation}
        More generally, a Runge-Kutta method with step size $\Delta t$ applied to \eqref{eq:testode} produces the following recurrence
        \begin{equation} \label{eq:rec_dal}
        y_{n}=R(\Delta t\lambda)y_{n-1}=R(\Delta t \lambda)^ny_0,
        \end{equation}
        where $R(z)$ is called the stability function and the stability domain is defined by
        \begin{equation*}
            \calS=\{z\in\IC\,;\,|R(z)|\leq 1\}.
        \end{equation*}
        Note that $z \in \calS$ ensures that $y_{n}$ in \eqref{eq:rec_dal} remains bounded as $n \rightarrow \infty$ similarly to the true solution. In the case of the explicit Euler we have that 
        $
        \mathcal{S}_{\text{E}}=\{|z+1|\leq 1 \}
        $
        which corresponds to a disc of radius one, centred at $-1$ on the complex plane. For stiff problems where $|\lambda| \gg 1$ this implies a severe time-step restriction namely $\Delta t < 2/|\lambda|$ when $\lambda <0$. In particular, for spatial discretization of the heat equation eigenmodes are typically of size $\mathcal{O}(\Delta x ^{-2})$ which would in turn imply the CFL condition $\Delta t  < c \Delta x^{2}$.

        To avoid such severe time-step restrictions one solution is to use implicit methods. The simplest one of those is the implicit Euler method which when applied to  \eqref{eq:testode} yields the recurrence
        \[
        y_{n+1}=\frac{1}{1-\lambda \Delta t} y_{n}.
        \]
        In this  case we have $\mathcal{S}_{\text{IE}}=\{|z-1| \geq 1 \}$ which contains $\mathbb{C}^{-}$. This property is known as A-stability and can only hold in the case of implicit methods.  A-stability means that there is no restriction on the time-step size for eigenvalues with negative real parts contrary to the {explicit} Euler method and other explicit methods. Nevertheless, in the case of severely stiff  dissipative problems, a desirable additional property to A-stability is $R(\infty)=0$ which is known as L-stability \cite{HaW96}.

        

       \subsection{Higher order Implicit Runge-Kutta methods}
       \label{subsec:hoirk}
        The implicit Euler is an example of an $L$-stable integrator meaning that is particularly suitable for integrating stiff equations. However, from the point of view of accuracy, it is a first-order method. To address this issue families of $L$-stable {Runge-Kutta methods have been proposed \cite{HaW96}} with an arbitrarily higher order of accuracy. Important examples are the families of the Radau and Lobatto Runge-Kutta methods are that achieve orders $2m-1$ and $2m-2$ respectively for any number $m$ of {internal stages \cite{I96}}. In particular, Radau with $m=1$ reduces to the implicit Euler method, while in the case where $m=2$  we obtain {the following implicit Runge-Kutta method for integrating \eqref{eq:ode},}
        \begin{subequations} \label{eq:radau2}
        \begin{eqnarray} 
         Y_{1}&=& y_{n}+\frac{5\Delta t}{12 }g\left(Y_{1}\right)-\frac{\Delta t}{12}g(Y_{2}), \\
       Y_{2}&=&  y_{n}+\frac{3\Delta t}{4 }g\left(Y_{1}\right)+\frac{\Delta t}{4}g(Y_{2}), \\
       y_{n+1}&=& Y_{2}.
       \end{eqnarray}
       \end{subequations}
  For a general $m$-stage implicit Runge-Kutta method with coefficients $a_{ij},b_j, i,j=1,\ldots,m$, given the numerical solution $y_n$ at time $t_{n}$  one needs to solve the following nonlinear system in dimension $m\cdot d$ with unknowns the internal stages $Y_i\in\R^d,i=1,\ldots,m$,
        \begin{equation}\label{eq:nonlinRK}
            Y_i= y_n + \Delta t \sum_{j=1}^m a_{ij} g(Y_j),\quad i=1,\ldots,m,
        \end{equation}
        to obtain the solution  $y_{n+1}$ at time $t_{n+1}$ by 
        \begin{equation} \label{eq:nonlinRKfinal}
            y_{n+1}= y_n + \Delta t \sum_{i=1}^m b_i g(Y_i).
        \end{equation}
        It is common to represent Runge-Kutta methods using a Butcher tableau
        \begin{equation}
            \begin{array}{c|c}
                 c&\calA\\
                 \hline
                 & b
            \end{array}
        \end{equation}
        where $\calA$ is the matrix of coefficients $\{a_{ij}\}_{i,j=1}^{m}$, $b$ is the weights vector $\{b_i\}_{i=1}^{m}$, and the vector $c$ is such that $c_i=\sum_{j=1}^{m} a_{ij}$.
        In compact matrix and tensor notations, \eqref{eq:nonlinRK} can be written as
\begin{equation}\label{eq:nonlinearimp}
	\boldsymbol{Y}
	=
	\mathbbm{1}_m \otimes y_n
	+
	\Delta t (\calA \otimes I_d)
	g[\boldsymbol{Y}]
	\end{equation}
where $\calA \in \R^{m\times m}$ is the Runge-Kutta matrix of coefficients $a_{ij}$ and 
\begin{equation} \label{eq:summary}
\mathbbm{1}_m = \begin{pmatrix}
	1\\
	1\\
	\vdots\\
	1\\
	\end{pmatrix}\in\R^m,\quad
\boldsymbol{Y}=\begin{pmatrix}
	Y_1\\
	Y_2\\
	\vdots\\
	Y_m\\
	\end{pmatrix}\in\R^{m \cdot d},\quad
 g[\boldsymbol{Y}] = \begin{pmatrix}
	g(Y_1)\\
	g(Y_2)\\
	\vdots\\
	g(Y_m)\\
	\end{pmatrix}\in\R^{m \cdot d}
 \end{equation}
         In order to implement a general $m$-stage  implicit  Runge-Kutta method, for example, an $m$-stage Radau method, one employs Newton's method or variants of it. These methods make use of the Jacobian of the system which is of size $m d\times md$. Diagonally implicit Runge-Kutta methods (DIRK) are an attractive alternative to reduce this computational cost. This is because in this case, the matrix $\calA$ in \eqref{eq:nonlinearimp} is lower triangular, which implies that the Jacobians used by the Newton method or its variants to solve \eqref{eq:nonlinearimp} are of dimension $d\times d$. Furthermore, when all the diagonal coefficients are equal, the method is referred to as singly diagonally implicit (SDIRK), which may offer the potential to further reduce computational costs \cite{HaW96}. A particular diagonal implicit method of interest in this paper is SDIRK4 as presented in \cite[Table IV.6.5, p. 100]{HaW96} and whose Butcher tableau is given by
         \begin{equation}\label{eq:sdirk4}
\begin{array}
{c|ccccc}
\\
 & \frac14\\[1mm]
 &\frac12 &\frac14 \\[1mm]
& \frac{17}{50}& \frac{-1}{25}&\frac14 \\[1mm]
& \frac{371}{1360} &\frac{-137}{2720} &\frac{15}{544} &\frac14\\[1mm]
&\frac{25}{24} &\frac{-49}{48} &\frac{125}{16} &\frac{-85}{12} &\frac14\\[1mm]
\hline
y_1&\frac{25}{24} &\frac{-49}{48} &\frac{125}{16} &\frac{-85}{12} &\frac14\\[1mm]
\hat y_1&\frac{59}{48} &\frac{-17}{96} &\frac{225}{32} &\frac{-85}{12} &0\\[1mm]
err&-\frac{3}{16} &\frac{-27}{32} &\frac{25}{32} & 0 &\frac14
\end{array}.
\end{equation}
\begin{remark}%
Note that for SDIRK4 we have $y_{n+1}=Y_m$, where $m=5$. We call such methods stiffly accurate \cite{HaW96}. 
{The coefficient $\gamma=1/4$ is chosen in \cite[Sect.\thinspace IV.6, p. 99]{HaW96} to achieve both $L$-stability and a favorable error constant with an embedded method for error control and a continuous output of order 3. In \cite[Sect.\thinspace IV.6, p. 99]{HaW96}, a numerical better choice with $\gamma=4/15$ and numerical values of the $L$-stable Runge-Kutta coefficients is also proposed but we choose in the paper the SDIRK4 method \eqref{eq:sdirk4} for simplicity due to its nice rational coefficients.}
\end{remark}
\begin{remark} \label{rem:variable_step}
{
The SDIRK4 method in \eqref{eq:sdirk4} also includes an embedded method $\hat y_1$ for a variable time-step strategy \cite{HaW96} based on the error estimator 
\begin{equation} \label{eq:shampine}
err=J^{-1}(y_1-\hat y_1)
\end{equation}
where each new step $y_1$ with time step $\Delta t$ is accepted if $err$ is small enough compared to the tolerance $tol$ or rejected otherwise. The next time step is then computed as $\Delta t_{new} = \xi \Delta t (tol/err)^{1/5}$ where $\xi=0.8$ is a safety factor and $tol$ is the tolerance prescribed by the user.
The term $J^{-1}$ in \eqref{eq:shampine} denotes the inverse of the Jacobian matrix $J=I_d-\Delta t \gamma \frac{\partial g}{\partial y}$ computed during the Newton method or its variant at iterates of the internal stages $Y_i$, and hence $J^{-1}$ is already computationally available with minor overhead in practice. This is a standard methodology for stiff problems proposed by Shampine, as presented in \cite[Sect.\thinspace IV.8]{HaW96}, to circumvent the lack of $L$-stability of the embedded method and hence  stabilize the error estimator which would otherwise be too pessimistic and fail for stiff problems. 
}
\end{remark}
       
        \subsection{Explicit stabilised Runge-Kutta methods} \label{sec:rkc}
        Even with the dimensionality reduction of diagonally implicit Runge-Kutta methods in terms of the Jacobians used for solving the nonlinear equations, implicit methods in high dimensions become prohibitively expensive. On the other hand, explicit schemes avoid the solution of such high-dimensional systems, but have bounded stability domains.  For this reason, standard explicit methods face severe step size restrictions when integrating stiff problems. Explicit stabilised Runge-Kutta methods represent a good compromise, they are fully explicit schemes with extended stability domains over the negative real axis.

        
        Explicit stabilised Runge-Kutta methods are constructed to have (nearly) maximal stability domain sizes along the negative real axis constrained by the fact that they remain explicit. In particular,  for an $s$-stage method the stability polynomial would be of degree $s$. The goal is then  to find, for a given integer $p$ (the order of the method), a polynomial of degree $s$ (the number of internal stages of the method) satisfying 
        \begin{equation*}
            R_s(z)=1+z+\frac{z^2}2+\dots+\frac{z^p}{p!}+\alpha_{p+1}z^{p+1}+\dots+\alpha_sz^s,
        \end{equation*}
        such that $|R_s(z)|\leq1$ for $z\in [-L_s,0]$ with $L_s>0$ is as large as possible. For $p=1$, the optimal value of $L_s$ is $2s^2$ \cite{Abdrev,HaW96}. The corresponding polynomial is the shifted Chebyshev polynomial $R_s(z)=T_s(1+z/s^2)$ where $T_s(\cos\theta) = \cos(s\theta)$ is the first kind Chebyshev polynomial of degree s. This polynomial oscillates between $-1$ and $1$ which introduces singularities in the stability domain and prevents convergence for some eigenvalues. For this reason, we modify the stability function in the following way
        \begin{equation}\label{eq:stabcheb}
            R_s(z)=\frac{T_s(\omega_0+\omega_1z)}{T_s(\omega_0)},
        \end{equation}
        where $\omega_0=1+\eta/s^2$, $\omega_1=T_s(\omega_0)/T_s'(\omega_0)$, and $\eta$ is called the damping parameter. The polynomial \eqref{eq:stabcheb} now oscillates between $-\alpha_s(\eta)$ and $\alpha_s(\eta)$ for every $z\in[-L_{s,\eta},-\delta_\eta]$, where
        \begin{equation}\label{eq:alphaL}
            \alpha_s(\eta)=\frac1{T_s(\omega_0)}<1, \quad L_{s,\eta}=\frac{1+\omega_0}{\omega_1},
        \end{equation}
        where $\delta_\eta$ is a positive real close to zero ($R_s(0)=1$). 

        Using the well-known three-term recurrence relation of the first kind Chebyshev polynomials $T_{j+1}(z)=2zT_j(z)-T_{j-1}(z)$, for $j\geq2$ (with $T_0(z)=1$ and $T_1(z)=z$), we can derive an efficient and stable implementation (in terms of memory and rounding errors) of the so-called Chebyshev method or the first order Runge-Kutta-Chebyshev (RKC) method with stability function \eqref{eq:stabcheb},  as analyzed by \cite{vanderHouwen1980} (also for second order explicit stabilised methods). When applied to non-linear problems where the Jacobian of $g$ has negative real eigenvalues, this method allows for taking much larger time steps than standard explicit methods would allow. Algorithm \ref{alg:rkc} summarises one step of the first order $s$-stage RKC method with step-size $\Delta t$ for the solution of \eqref{eq:ode}. 
\begin{algorithm}  
\caption{{First order} Runge-Kutta Chebyshev for the time integration of \eqref{eq:ode}} 
\textbf{Input:} Initial state $y_{n}$, damping $\eta$, upper bound $L$ for the eigenvalues of  the Jacobian of $g$ at $y_{n}$, time step $\Delta t$
        \begin{equation}\label{eq:rkc1}
            \begin{split}
            K_0 &= y_n,\quad K_1=K_0+\mu_1 \Delta t g(K_0),\\
            K_j &= \mu_j \Delta t g(K_{j-1})+\nu_jK_{j-1}+(1-\nu_j)K_{j-2},\quad j=2,\dots,s,\\
            y_{n+1} &= K_s,
            \end{split}
        \end{equation}
        where
        $$\mu_j=\frac{2\omega_1T_{j-1}(\omega_0)}{T_j(\omega_0)},\quad \nu_j=\frac{2\omega_0T_{j-1}(\omega_0)}{T_j(\omega_0)}$$
        and $s$ should be chosen large enough such that $L_{s,\eta}>L\Delta t$ 
        \newline
        \textbf{Output:} Solution $y_{n+1}$ after a time step of size $\Delta t$ 
        \label{alg:rkc}
\end{algorithm}
Note that there are three parameters needed for implementing this Algorithm, the damping $\eta$, the number of stages $s$ as well as the  step-size $\Delta t$. 
{For a fixed value of the damping parameter $\eta$ and a given time step $\Delta t$ one needs to choose the stage parameter $s$ in such a way that $L_{s,\eta} \geq L \Delta t$.
Typically for ODE integration one takes $\eta$ small (e.g $\eta=0.05$) which gives a nearly optimal stability domain size $L_{s,\eta} \geq (2-\frac{4}{3}\eta)s^{2}$, and a stage parameter
\[
s=\left\lceil \sqrt{\frac{L \Delta t}{2-\frac{4}{3}\eta}}\right\rceil
\] where $L$ is an upper bound for the largest eigenvalue of the Jacobian of $g$ evaluated at $y_{n}$.
For larger values of damping parameter $\eta$, the width $L_{s,\eta}$ of the stability domain reduces further but still grows quadratically with $s$ and the number of stages $s$ is then chosen large enough accordingly, see  for instance \cite[Remark 6.1]{AAV18} for details.
}
\subsection{Application to optimization}

In the recent paper \cite{EVVZ21} the authors utilised \eqref{eq:rkc1} to solve efficiently stiff optimization problems by an algorithm called  Runge-Kutta Chebyshev descent {(RKCD)}.  Here, by stiff optimization problems, we mean the case where the Hessian of the objective function has large eigenvalues distributed in the positive half-plane close to the real axis. In particular, consider the problem 
	\begin{equation}\label{eq:min}
		\min_{x\in\ \R^d}f(x),
	\end{equation}
	where the dimension $d$ is large and $f:\R^d\to\R$,  is $\ell$-strongly convex differentiable function, with $L$-Lipschitz derivative, and such that the real parts of the eigenvalues of its Hessian matrix $\mathcal{H}f(x)$ are strictly positive for all $x\in\R^d$. The condition number of the Hessian is bounded by $\kappa=\frac{L}{\ell}$.
 
 A starting point for solving the minimization problem \eqref{eq:min} is to consider discretizations of the gradient flow
	\begin{equation}\label{eq:gf}
		\dot x(t) = -\nabla f(x(t)), \qquad x(0)=x_0 \in \R^d,
	\end{equation}
	where $x_0$ is an initialization. Applying an explicit Euler method with step size\footnote{We will use $\Delta t$ for the actual time step of numerical integrators for differential equations and $h$ for optimization steps.} $h$ to \eqref{eq:gf} is equivalent to the gradient descent method
	\begin{equation}\label{eq:gradient descent}
		x_{k+1}=x_k-h\nabla f(x_k),\qquad k=0,1,2,...
	\end{equation}
	where $x_k$\footnote{When we describe an optimization algorithm to approximate a minimizer $x_{*}$ we will use the notation $x_{k}$ for the $k$-th iterate of the algorithm.} is the numerical approximation of $x(t_k)$. In the strongly convex setting described above when  ($L\gg l$), this method faces a severe restriction on $h$ to be stable. Furthermore, it is known that the optimal $h$ in terms of convergence speed towards the minimizer is given by {$h=\frac{2}{L+\ell}$ \cite{N18}}.  This implies that the number of iterations needed to find the minimizer of $f$ scales linearly with the condition number $\kappa$. Note that this behaviour is suboptimal since the most efficient optimization methods that make use only of the gradient of $f$ need $\mathcal{O}(\sqrt{\kappa})$ iterations to {find the minimum \cite{N18}}.  

To improve the convergence speed of the explicit Euler method one can apply a more sophisticated discretization of the gradient flow \eqref{eq:gf}. For example, one can use the first order RKC \eqref{eq:rkc1} to integrate  \eqref{eq:gf}. In particular, if one has estimates for the smallest and largest eigenvalues of the Hessian matrix $\nabla^{2} f$, then by choosing the parameters $\eta, h,$ and $s$ in \eqref{eq:rkc1} appropriately one can obtain a very efficient optimization method called RKCD \cite{EVVZ21} for strongly convex problems, see also Algorithm \ref{alg:rkcd}. In the case of RKCD, a calculation of $x_{k+1}$ for $x_{k}$ requires $s$ gradient evaluation. Now the following proposition from \cite{EVVZ21} characterizes the speed of convergence of iterates $x_{k}$ towards the minimizer  $x_{\star}$ when $f$ is quadratic in \eqref{eq:min}.

\begin{algorithm} [t]
\caption{{\cite{EVVZ21}} Runge-Kutta Chebyshev descent method (RKCD)} 

\textbf{Input:}
The gradient force $g=-\nabla f$ of a differentiable function $f:\mathbb{R}^d\rightarrow\mathbb{R}$.
Damping $\eta$. Lower and upper bounds $\ell$ and $L$ for eigenvalues of the Jacobian of $g$.  Initialisation $x_0\in\mathbb{R}^d$. 

\textbf{Body:} 
\begin{itemize}
  \item Compute the time step $h$ (learning rate) and the number of stages $s$ using 
  \begin{equation}  \label{eq:defn of parameters1}
  s=\left\lceil\sqrt{(\kappa-1)\eta/2}\right\rceil,\quad h=\frac{\omega_0-1}{\omega_1\ell}
  \end{equation}
  where $\kappa=L/\ell$  and
\begin{equation}\label{eq:defn of parameters}
\omega_{0}=1+\frac{\eta}{s^{2}}, \quad \omega_{1}
=\frac{T_{s}(\omega_{0})}{T'_{s}(\omega_{0})},
\end{equation}
with $T_s$ is the Chebyshev polynomial of the first kind with degree $s$. 
\item Until convergence, for $k\in \{0,1,2,3,\ldots\}$, \textbf{repeat}:
\begin{itemize}
\item Set $x_k^0 = x_k$ and $x_k^1 = x_k^0 + h\mu_1 g(x_k)$ with $\mu_1 = \omega_1/ \omega_0$
\item For $j\in \{2,\cdots,s\}$, \textbf{repeat}:
$$x_k^j = \mu_j h g(x_k^{j-1}) + \nu_j x_{k}^{j-1}-(\nu_j-1) x_{k}^{j-2},$$ 
\[
\mu_j = \frac{2\omega_1 T_{j-1}(\omega_0)}{T_j(\omega_0)},
\quad \nu_j = \frac{2\omega_0 T_{j-1}(\omega_0)}{T_j(\omega_0)}.
\]
\item Set $x_{k+1}=x_{k}^s$. 
\end{itemize}
\end{itemize}

\vspace{0.1cm}

\textbf{Output: } 
Estimate $\widehat{x}={x}_{n+1}$ of a minimiser of  ~\eqref{eq:min}. 

\label{alg:rkcd}
\end{algorithm}

 \begin{proposition} \label{prop:quad_RKC}
 {\cite{EVVZ21}.} {Let $f(x)=\frac12 x^TCx+b^Tx+c$ and consider the minimization problem \eqref{eq:min}}. Now let $x_{k}$ be the iterate of Algorithm \ref{alg:rkcd} with parameters chosen according to \eqref{eq:defn of parameters1} and
 \eqref{eq:defn of parameters}. 
Then the following {convergence estimates hold,}
 \begin{equation} \label{eq:ratte}
         \|x_{k}-x_*\|\leq \alpha^{k}_{s}(\eta)\|x_0-x_*\|,
     \end{equation}
     and
 \begin{equation}
         f(x_{k+1})-f(x_*)\leq \alpha^{2}_{s}(\eta)(f(x_k)-f(x_*)),
     \end{equation}
where  $x_{*}$ is the unique minimizer of $f$, and $\alpha_{s}(\eta)$ satisfies \eqref{eq:alphaL}. 

\end{proposition}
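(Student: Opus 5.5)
The plan is to exploit that for a quadratic objective the gradient flow is linear, so one RKCD step is multiplication by a polynomial in the Hessian. With $f(x)=\frac12 x^TCx+b^Tx+c$ and $C$ symmetric positive definite with spectrum in $[\ell,L]$, we have $g(x)=-\nabla f(x)=-C(x-x_*)$, where $x_*=-C^{-1}b$. Write $e_k^j=x_k^j-x_*$. Since the coefficients satisfy $\nu_j+(1-\nu_j)=1$, subtracting $x_*$ from each stage of Algorithm \ref{alg:rkcd} gives the same three-term recurrence for the errors,
\[
e_k^j=-\mu_j hCe_k^{j-1}+\nu_j e_k^{j-1}+(1-\nu_j)e_k^{j-2},
\]
started from $e_k^0=e_k$ and $e_k^1=(I-\mu_1hC)e_k$.

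First step: identify the iteration matrix. By induction on $j$, using the Chebyshev recurrence $T_{j+1}(z)=2zT_j(z)-T_{j-1}(z)$, we show $e_k^j=R_j(-hC)e_k$ with $R_j(z)=T_j(\omega_0+\omega_1 z)/T_j(\omega_0)$. This is the standard verification for \eqref{eq:rkc1}: the definitions of $\mu_j$ and $\nu_j$ are exactly those that turn the Chebyshev recurrence into the stage recurrence. Hence $e_{k+1}=R_s(-hC)e_k$, with $R_s$ as in \eqref{eq:stabcheb}.

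Second step: bound the spectrum. Diagonalize $C=Q\Lambda Q^T$ orthogonally, so that $\|R_s(-hC)\|_2=\max_{\lambda\in\sigma(C)}|R_s(-h\lambda)|$. We need $-h\lambda\in[-L_{s,\eta},-\delta_\eta]$ for all $\lambda\in[\ell,L]$. Equivalently, $\omega_0-\omega_1 h\lambda\in[-1,1]$, where $|T_s|\le1$ and therefore $|R_s|\le 1/T_s(\omega_0)=\alpha_s(\eta)$.
\begin{itemize}
\item The lower end is exact by construction: $h=(\omega_0-1)/(\omega_1\ell)$ gives $\omega_0-\omega_1h\ell=1$.
\item The upper end needs $\omega_0-\omega_1hL\ge -1$, that is $(\omega_0-1)\kappa\le 1+\omega_0$, i.e. $\kappa\le 1+2s^2/\eta$. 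This holds because $s\ge\sqrt{(\kappa-1)\eta/2}$ by \eqref{eq:defn of parameters1}.
\end{itemize}
The main (mild) obstacle is this parameter bookkeeping, namely checking that the choice of $s$ and $h$ maps $[\ell,L]$ into the oscillation interval. Once that is in place, $\|e_{k+1}\|\le\alpha_s(\eta)\|e_k\|$, and iterating gives \eqref{eq:ratte}.

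Third step: the function values. Since $f(x)-f(x_*)=\frac12 e^TCe$ and $R_s(-hC)$ commutes with $C$, we expand in the eigenbasis:
\[
f(x_{k+1})-f(x_*)=\tfrac12\sum_i\lambda_i R_s(-h\lambda_i)^2\,(Q^Te_k)_i^2\le\alpha_s(\eta)^2\,\tfrac12\sum_i\lambda_i (Q^Te_k)_i^2=\alpha_s(\eta)^2\bigl(f(x_k)-f(x_*)\bigr).
\]
This completes the argument.
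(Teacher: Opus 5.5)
Your proof is correct and follows the standard argument behind this result, which the paper imports from \cite{EVVZ21} without reproducing it: the linear error recursion $x_{k+1}-x_*=R_s(-hC)(x_k-x_*)$ is combined with the fact that the choice of $h$ and $s$ places $-h\,\sigma(C)$ in $[-L_{s,\eta},-(\omega_0-1)/\omega_1]$, where $|R_s|\le\alpha_s(\eta)$, and both estimates then follow in the orthonormal eigenbasis of $C$. Your parameter check, $\kappa\le 1+2s^2/\eta$ if and only if $s\ge\sqrt{(\kappa-1)\eta/2}$, is exactly the bookkeeping this requires.
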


    A straightforward calculation shows that the learning rate $h$ in \eqref{eq:defn of parameters1} satisfies
    $
    h= \tfrac{\eta}{\ell} + \bigo(\eta^2 + s^{-2}),
    $
    for $\eta\rightarrow 0, s\rightarrow +\infty$ which makes it mildly dependent on $s$. 
Precisely, the following estimates holds.
\begin{lemma}
The learning rate $h$ defined in \eqref{eq:defn of parameters1} satisfies the bound
\begin{equation} \label{eq:h_bound}
h\leq \frac{\eta e^\eta}{\ell}.
   \end{equation}
\end{lemma}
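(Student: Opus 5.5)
We need $h=(\omega_0-1)/(\omega_1\ell)=\frac{\eta}{s^2\ell}\,\frac{T_s'(\omega_0)}{T_s(\omega_0)}$. The claim is therefore equivalent to the inequality
\[
\frac{T_s'(\omega_0)}{T_s(\omega_0)}\le s^2 e^{\eta},\qquad \omega_0=1+\eta/s^2 .
\]
The plan is to bound the logarithmic derivative of $T_s$ on $[1,\infty)$ using the hyperbolic parametrisation $T_s(\cosh\theta)=\cosh(s\theta)$.

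With $x=\cosh\theta$, $\theta\ge0$, we have
\[
\frac{T_s'(x)}{T_s(x)}=\frac{s\tanh(s\theta)}{\sinh\theta}.
\]
The first step is to use $\tanh(u)\le u$, which gives $\frac{T_s'(x)}{T_s(x)}\le s^2\frac{\theta}{\sinh\theta}\le s^2$. This is even stronger than needed: it yields $h\le \eta/\ell\le \eta e^\eta/\ell$.

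The degenerate case $\theta=0$ (i.e.\ $\eta=0$) is handled by continuity, since $T_s'(1)/T_s(1)=s^2$. Alternatively, this case can be avoided because $\eta>0$.

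As a sanity check, an alternative route avoids hyperbolic functions. Write $T_s(x)=\prod_k(x-x_k)$ up to a constant, where the roots $x_k\in(-1,1)$. Then $T_s'/T_s=\sum_k 1/(x-x_k)$, which is decreasing in $x$ for $x>1$. Its value at $x=1$ is $T_s'(1)/T_s(1)=s^2$. Hence $T_s'(\omega_0)/T_s(\omega_0)\le s^2$ for all $\omega_0\ge1$.

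Combining either argument with the expression for $h$ above gives $h\le\eta/\ell\le\eta e^\eta/\ell$. The only mild obstacle is recognizing that $\omega_1=T_s(\omega_0)/T_s'(\omega_0)$ must be bounded below by $1/s^2$. The monotonicity of the logarithmic derivative, or the inequality $\tanh u\le u$, handles this directly. The factor $e^\eta$ in the statement is thus slack, presumably coming from the authors using a cruder bound such as $T_s(\omega_0)\le e^{\eta}$-type estimates.
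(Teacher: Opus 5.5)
Your proof is correct, and it takes a genuinely different route from the paper's.

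\textbf{The paper's route.} It discards the denominator via $T_s(\omega_0)\ge 1$, i.e.\ $1/\omega_1\le T_s'(\omega_0)$. It then Taylor-expands $T_s'$ about $1$, using the explicit values $T_s^{(k)}(1)=\prod_{j=0}^{k-1}\frac{s^2-j^2}{2j+1}\le s^{2k}$. This gives $\eta T_s'(\omega_0)/s^2\le\sum_{k}\eta^{k+1}/k!\le\eta e^{\eta}$. So the factor $e^\eta$ comes from the growth of the numerator $T_s'(\omega_0)$ once the denominator has been thrown away. It does not come from an estimate on $T_s(\omega_0)$, as you guessed.

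\textbf{Your route.} You keep the ratio and show that the logarithmic derivative $T_s'/T_s$ never exceeds its value $s^2$ at $x=1$ on all of $[1,\infty)$. You give two arguments: one via $\tanh u\le u$ and $\sinh\theta\ge\theta$, the other via monotonicity of $\sum_k 1/(x-x_k)$ with all roots in $(-1,1)$. Both are valid.

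\textbf{Comparison.} The paper's argument needs only a power series and the derivatives of $T_s$ at $1$. However, it loses exactly the factor $e^\eta$, which is large for the damping $\eta=4$ used in the experiments ($4e^4\approx 218$). Your argument gives the sharper bound $h\le\eta/\ell$, whose constant is optimal since $h\ell/\eta\to 1$ as $\eta\to 0$. The hyperbolic version gives even more: $h\le\frac{\eta}{\ell}\frac{\tanh(s\theta)}{s\theta}$ with $\cosh\theta=\omega_0$. For $\eta=4$, $\ell=1$ this is about $1.40$, the value the paper itself quotes later.

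\textbf{Consequence.} Your bound could replace $\eta e^\eta/\ell$ wherever the lemma is used downstream, notably in $\hat\rho$ in \eqref{eq:rate_estimate} and in Theorem~\ref{thm:ad}. That would make the advection condition there substantially less restrictive.
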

\begin{proof}
We recall the following formula for all $s\geq 1,k\geq 0$,
\begin{equation} \label{eq:Tsprime}
T_s^{(k)}(1) = \prod_{j=0}^{k-1} \frac{s^2-j^2}{2j+1} \leq s^{2k}.
\end{equation} 
Using $1/\omega_1 = T_s'(\omega_0)/T_s(\omega_0)\leq T_s'(\omega_0)$, and performing a Taylor expansion of $T_s'(\omega_0)$ for $\eta$ close to 
$0$ yields
$$
\ell h=\frac{\eta}{s^2\omega_1} \leq \frac{\eta T_s'(\omega_0)}{s^2}
=\sum_{k=0}^{s-1} \frac{T_s^{(k+1)}(1)}{k! s^{2k+2} }{\eta^{k+1}} \leq 
\sum_{k=0}^{s-1} \frac{\eta^{k+1}}{k!} \leq \eta e^\eta
$$
where we used \eqref{eq:Tsprime} in the last inequality, which concludes the proof. 
\end{proof}
Using this proposition and the properties of $\alpha_{s}(\eta)$  it is possible to show that the effective rate of convergence of function iterates towards the minimizer $x_{*}$ scales like $1-c(\eta)/\sqrt{\kappa}$  when $\kappa \gg 1$ \cite{EVVZ21}, exhibiting thus accelerated behavior from an optimization point of view. This, in turn, implies that  to calculate $x_{\star}$ with target  accuracy $\epsilon$ one needs to do $\mathcal{O}(\sqrt{\kappa}\log{\epsilon^{-1}})$ gradient evaluations. 


 
        
	\section{A new explicit implementation of SDIRK  methods}
 \label{sec:main}
As discussed in Section \ref{subsec:hoirk}, implicit methods are usually implemented using Newton's method or its variants which can be memory demanding and delicate for ill-conditioned problems in particular for high-dimensional problems arising from discretizing PDEs.  Here, we propose an alternative way of implementing implicit Runge-Kutta methods 
\eqref{eq:nonlinRK}-\eqref{eq:nonlinRKfinal}
based on explicit stabilised methods. To do this, one needs to find the solution $\boldsymbol{Y}_{n+1}$ to the system of non-linear equations \eqref{eq:nonlinearimp}  given the current state $y_{n}$. Our starting point for achieving this is 
to introduce the auxiliary system of ODEs,
\begin{equation}\label{eq:gf1}
			\dot {\boldsymbol{Y}}(t)=-\boldsymbol{Y}(t)
	+\mathbbm{1}_m \otimes y_n
	+
	\Delta t (\calA \otimes I_d)
	g[\boldsymbol{Y}(t)]
\end{equation}  
that satisfies under appropriate assumptions 
\[
\lim_{t  \rightarrow +\infty} \boldsymbol{Y}(t) = \boldsymbol{Y}_{n+1}.
\]
We have thus reformulated the solution of an $m$-stage implicit Runge-Kutta method at time $t_{n+1}$ as the steady-state solution of the  ODE \eqref{eq:gf1}. 
In the simplest case of the implicit Euler method and assuming a gradient structure $g(y)=-\nabla V(y)$ in \eqref{eq:ode}, it is easy to see that  \eqref{eq:gf1} also has a gradient structure \eqref{eq:gf} with 
$ f(y)=\Delta t V(y)+\frac12 \| y - y_n\|_2^2. $ 
In this simple situation, following the ideas in \cite{EVVZ21} one can apply RKCD directly to compute $\boldsymbol{Y}_{n+1}$. 
However, this gradient structure assumption is not satisfied in general for a diagonally implicit Runge-Kutta method, even when \eqref{eq:main} has a gradient structure. In particular, \eqref{eq:gf1} in this case becomes
\begin{equation} \label{eq:gff}
\dot {\boldsymbol{Y}}(t)=-\boldsymbol{Y}(t) +\mathbbm{1}_m \otimes y_n
	+
	\Delta t (\calA \otimes I_d)(F_{D}[{\boldsymbol{Y}}(t)]+F_{A}[{\boldsymbol{Y}}(t)]+F_{R}[{\boldsymbol{Y}}(t)])
\end{equation}
As we will see, without this gradient assumption, applying Algorithm \ref{alg:rkcd}  to \eqref{eq:gff} would lead to arbitrarily slow convergence to $\boldsymbol{Y}_{n+1}$ as the stiffness grows. To tackle this issue, and inspired by the partitioning $\calA=\gamma I_m+(\calA-\gamma I_m)$ of the Runge-Kutta coefficients into diagonal part and strictly lower diagonal part proposed in \cite {VSC92},
we consider the following partition of \eqref{eq:gff},
\begin{subequations} \label{eq:gff_part}
\begin{align}
\dot{{\boldsymbol{Y}}}(t) \;=\;& \mathbbm{1}_m \otimes y_n - {\boldsymbol{Y}}(t) \;+\; \Delta t\, \gamma\, I_m \otimes I_d \, F_D[ {\boldsymbol{Y}}(t)] &&=:~G_D( {\boldsymbol{Y}}(t))\\
&\quad +~ \Delta t\, (\calA-\gamma I_m)\otimes I_d \, F_D[ {\boldsymbol{Y}}(t)] \;+\; \Delta t\, \calA\otimes I_d \, F_A[{\boldsymbol{Y}}(t)] &&=:~G_A( {\boldsymbol{Y}}(t))\\
&\quad +~ \Delta t\,\calA\otimes I_d \, F_R[ {\boldsymbol{Y}}(t)] &&=:~G_R( {\boldsymbol{Y}}(t)),
\end{align}
\end{subequations}
where we split the coefficient matrix $\mathcal{A}$ of the
singly diagonally implicit Runge-Kutta method
into its diagonal part $\gamma I_m$ and its strictly lower triangular part $\mathcal{A}-\gamma I_m$. 
The new Algorithm \ref{alg:3} is then defined as follows.
\begin{algorithm} 
\caption{{New} Runge-Kutta Chebyshev implementation of diagonally implicit Runge-Kutta methods \label{alg:3} }

\textbf{Input:}
The vector fields $\mathbb{R}^d\rightarrow\mathbb{R}^d$, $F_D$ (diffusion terms), $F_A$ (advection terms), $F_R$ (reaction terms).
The initial state $y_{n}\in\mathbb{R}^d$, an approximation ${\boldsymbol{x}}_0\in\mathbb{R}^{md}$ of ${\boldsymbol{Y}}_{n}$. Upper bounds $\lambda_{max}$ for the spectral radius of $\nabla F_D$.  
Diagonally Runge-Kutta coefficient matrix $A$ with value $\gamma>0$ on the diagonal, time-step size $\Delta t$. Damping $\eta$. 

\vspace{0.1cm}

\textbf{Body:} 
\begin{itemize}
  \item Compute the time step $h$ (learning rate) and the number of stages $s$ according to 
  \[
  s=\left\lceil\sqrt{(\kappa-1)\eta/2}\right\rceil,\quad h=\frac{\omega_0-1}{\omega_1\ell}
  \]
  where $\ell=1$, $L=1+\gamma \Delta t \lambda_{max},\kappa=L/\ell$ and
\[
\omega_{0}=1+\frac{\eta}{s^{2}}, \quad \omega_{1}
=\frac{T_{s}(\omega_{0})}{T'_{s}(\omega_{0})},
\]
with $T_s$ is the Chebyshev polynomial of the first kind with degree $s$. 
\item Until convergence, for $k\in \{0,1,2,3,\ldots\}$, \textbf{repeat}:
\begin{itemize}
\item Define 
\begin{align*}
G({\boldsymbol{x}}):&=G_D({\boldsymbol{x}}) - G_D({\boldsymbol{x}}_k)
+ J^{-1}\!\bigl(G_D({\boldsymbol{x}}_k)+G_A({\boldsymbol{x}}_k)+G_R({\boldsymbol{x}}_k)\bigr),
\\
J &= I_{md} - h\, G'_R({\boldsymbol{x}}_k),
\end{align*}
\item Set ${\boldsymbol{x}}_k^0 = {\boldsymbol{x}}_k$ and ${\boldsymbol{x}}_k^1 = {\boldsymbol{x}}_k^0+ h\mu_1 G({\boldsymbol{x}}_k)$ with $\mu_1 = \omega_1/ \omega_0$
\item For $j\in \{2,\cdots,s\}$, \textbf{repeat}:
$${\boldsymbol{x}}_k^j = \mu_j h G({\boldsymbol{x}}_k^{j-1}) + \nu_j {\boldsymbol{x}}_{k}^{j-1}-(\nu_j-1) {\boldsymbol{x}}_{k}^{j-2},$$ 
\begin{equation} \label{eq:rkcd_iter}
\mu_j = \frac{2\omega_1 T_{j-1}(\omega_0)}{T_j(\omega_0)},
\quad \nu_j = \frac{2\omega_0 T_{j-1}(\omega_0)}{T_j(\omega_0)}.
\end{equation}
\item Set ${\boldsymbol{x}}_{k+1}={\boldsymbol{x}}_{k}^s$. 
\end{itemize}
\end{itemize}

\vspace{0.1cm}

\textbf{Output: } 
Approximation ${\boldsymbol{x}}_{k+1}$ of the Runge-Kutta solution ${\boldsymbol{Y}}_{n+1}\in\mathbb{R}^{md}$ after one step. 

\label{alg:explicitimplicit}
\end{algorithm}

\begin{remark}
Algorithm \ref{alg:explicitimplicit} 
calculates
 $\boldsymbol{Y}_{n+1}$ by applying  Algorithm \ref{alg:rkcd}  to the following new partitioned auxiliary problem:
\begin{align} \label{eq:part}
\dot{\boldsymbol{Y}}
&= G_D(\boldsymbol{Y}) - G_D(\boldsymbol{Y}_n)
+ J^{-1}\!\bigl(G_D(\boldsymbol{Y}_n)+G_A(\boldsymbol{Y}_n)+G_R(\boldsymbol{Y}_n)\bigr),
\\
J &= I_{md} - h\, G'_R(\boldsymbol{Y}_n).
\end{align}
Note that this partitioning preserves the steady state ${\boldsymbol{Y}}_{n+1}$ of \eqref{eq:gff} since 
\[
G_{D}({\boldsymbol{Y}}_{n+1})+G_{A}({\boldsymbol{Y}}_{n+1})+G_{R}({\boldsymbol{Y}}_{n+1})=0.
\]
\end{remark}

\begin{remark}
The partitioning \eqref{eq:gff_part} and the corresponding treatment of the $G_{R}$ term in \eqref{eq:part} assume that the reaction term $F_{R}$ is stiff. Note that the Jacobian inversion in this setting is cheap as it has a block diagonal structure thanks to the local nature of the reaction terms.  In practice, for a non-stiff reaction, one takes $G_{R}$ to be zero to avoid the Jacobian computations and then take \[G_{A}(\boldsymbol{Y}(t)):=\Delta t\, (\calA-\gamma I_m)\otimes I_d \, F_D[ {\boldsymbol{Y}}(t)] \;+\; \Delta t\, \calA\otimes I_d \, \left(F_A[{\boldsymbol{Y}}(t)]+ F_R[ {\boldsymbol{Y}}(t)]\right).
\]
This corresponds to treating together the advection and non-stiff reaction terms as first proposed for PRKC \cite{Zb11} and also used in PIROCK \cite{AV13}.
\end{remark}

In Algorithm \ref{alg:3}, we set $\ell=1$ for simplicity, as the spectral gap for the Jacobian of $G_D$. Note that if $\lambda_{\min}\geq 0$ denotes the smallest eigenvalue of the Jacobian of $-F_D$ one could use the more precise value $\ell=1+\Delta t\lambda_{\min}$. 
Concerning the choice of the damping parameter $\eta$, while the analysis applies for any value $\eta>0$, it was shown numerically in \cite{EVVZ21} that the value $\eta=1.17$ is an efficient choice for RKCD. For Algorithm \ref{alg:3}, we choose $\eta=4$ which reveals efficient in the considered numerical test problems (see Section \ref{sec:num}). Concerning the stopping criteria for the main iteration of Algorithm \ref{alg:3}, we use for simplicity the condition $\|\boldsymbol{x}_k-\boldsymbol{x}_{k-1}\|<tol_{it}$, with $tol_{it}=10^{-12}$. For the initialisation, we use for simplicity $\boldsymbol{x}_0=\mathbbm{1}_m\otimes y_n$ where $y_n$ is the current state.


\section{Convergence and stability analysis} \label{sec:conv}
In this section, we prove the convergence of Algorithm \ref{alg:3} in two special cases, pure linear diffusion and linear advection-diffusion. Additionally, we provide the rationale behind the partition in equation \eqref{eq:gff_part} for a general advection-diffusion-reaction equation.

\subsection{The case of pure linear diffusion}
We  consider 
\begin{equation}\label{eq:heat}
\partial_t u=a\Delta u, \quad (t,x)\in[0,+\infty)\times(0,1)^{N}
\end{equation}
with periodic boundary conditions and a smooth initial condition $u(x,0)=1+e^{-50\norm{x-c}^2}$ where $c$ is the center of the spatial domain.  We  now  discretise in space using a finite difference method with a uniform mesh size $\Delta x$ in each spatial dimension.  We thus have 
\begin{equation} \label{eq:pure_diffusion}
F_{D}(y)= -Dy, \quad F_{A}(y)=0 \quad F_{R}(y)=0
\end{equation}
in \eqref{eq:main} with  $D \in \mathbb{R}^{d\times d}$ is the discrete Laplacian with periodic boundary conditions. In this case, we have that the bound for the spectral radius of $\nabla F_{D}(y)=-D$ is given by $\lambda_{\text{heat},max}=4 N a /\Delta x^{2}$, for $N=1,2,3$.

\paragraph{Implicit Euler}
We consider the solution of \eqref{eq:main} by implicit Euler when the different terms are given by \eqref{eq:pure_diffusion}. In this case, we have that $m=1$ and $\mathcal{A}=\gamma=1$ in \eqref{eq:gff} implying that 
\begin{equation} \label{pure_diffusion1}
G_{D}(y)= \mathbbm{1}_m\otimes y_{n}-y-\Delta t D y, \quad G_{A}(y)=0 \quad G_{R}(y)=0
\end{equation}
in \eqref{eq:gff_part}.  
We now use Algorithm \ref{alg:explicitimplicit} to calculate the solution of implicit Euler at time $t_{n+1}$ given the solution $y_{n}$ at time $t_{n}$. Note that in this case, because $G_{A}(y)=G_{R}(y)=0$  and $m=1$ \eqref{eq:gff} has a gradient structure and Algorithm \ref{alg:3} coincides with Algorithm \ref{alg:rkcd}. One would thus expect from Proposition \ref{prop:quad_RKC} that given $y_{n}$, the cost to approximate $y_{n+1}$ scales proportionally to the 
$\sqrt{\kappa}$ where $\kappa$ is the condition number of $G_{D}$ given by $\kappa=1+\gamma \Delta t \lambda_{\text{heat},max}=1+4Na/\Delta x^{2}$.  To illustrate this, we now vary $\kappa$ by changing the size of the mesh refinement $\Delta x= 1/4,1/8,1/16,1/32,1/64$ while keeping $\Delta t=5 \times 10^{-2}$. The results of this experiment are presented in Figure \ref{fig:3dLaplace} for $N=1$ and $N=3$ spatial dimensions, and show that the cost in terms of gradient evaluations indeed scales like $\mathcal{O}(\sqrt{\kappa})$ independently of the value of $N$. Here we have used a prescribed tolerance $\varepsilon=10^{-10}$ in the criteria for stopping the iterations.


\begin{figure}
    \centering
    \begin{subfigure}{0.49\linewidth}
    \centering
    \includegraphics[width=\textwidth]{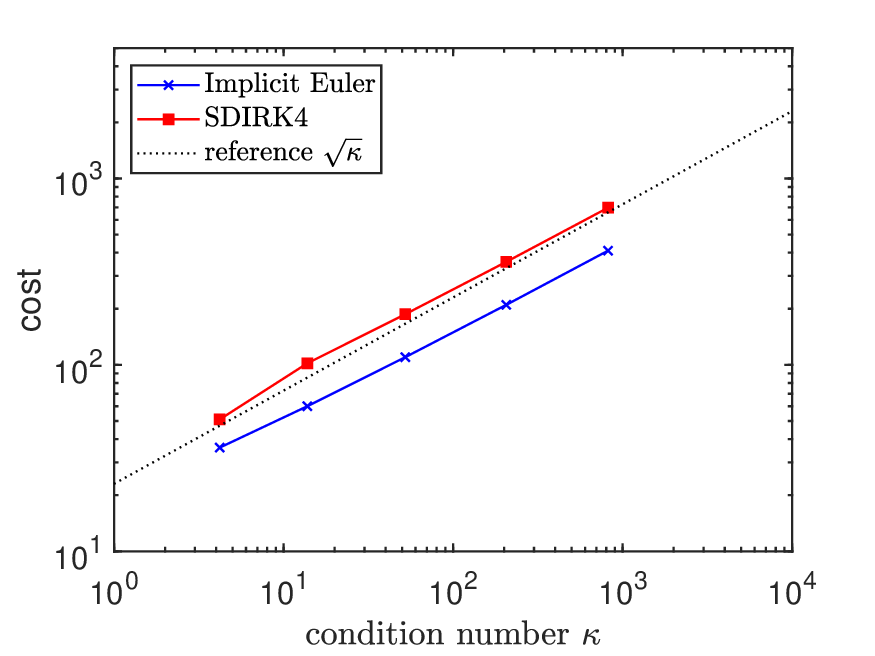}
    \caption{1D case, DOF: $4,8,16,32,64$}
    \end{subfigure}
    \begin{subfigure}{0.49\linewidth}
    \centering
    \includegraphics[width=\textwidth]{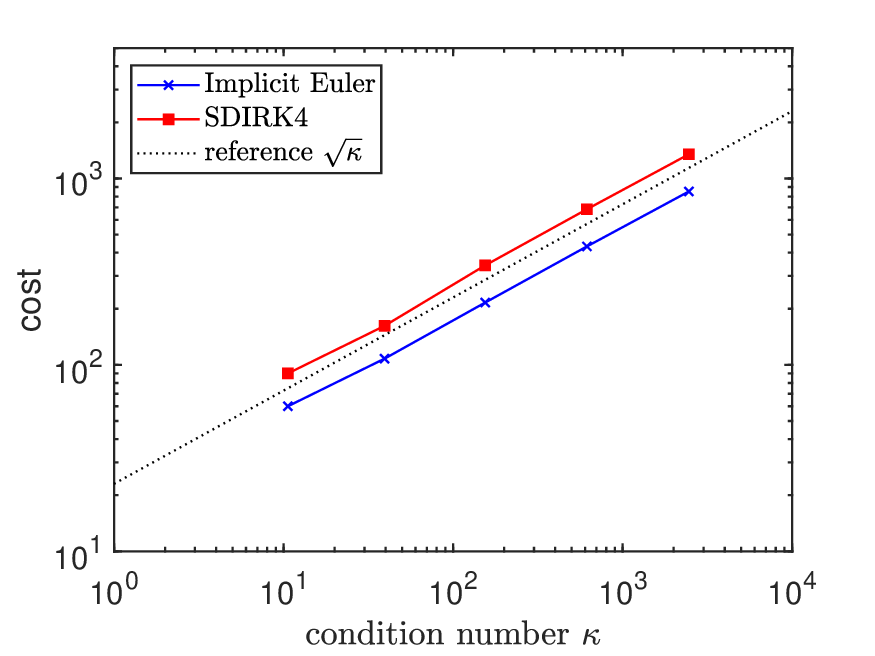}
    \caption{3D case, DOF: $4^3,8^3,16^3,32^3,64^3$}
    \end{subfigure}
    \caption{Comparison of the cost (number of matrix-vector products)   as a function of the condition number $\kappa$ for Algorithm \ref{alg:3}. }
    \label{fig:3dLaplace}
\end{figure}
\paragraph{SDIRK4}
We  now consider the solution of \eqref{eq:main} by SDIRK4 when the different terms are given by \eqref{eq:pure_diffusion}. In this case $m=5$ and the Runge-Kutta matrix $\mathcal{A}$ is given by \eqref{eq:sdirk4}, hence $\gamma=1/4$ and
{
\begin{subequations} \label{eq:pure_diffusion_sdirk}
\begin{align} 
G_{D}(\mathbf{Y}) &= \mathbbm{1}_{m}\otimes y_{n}-\mathbf{Y}-\frac{1}{4}\Delta t (I_m \otimes D) \mathbf{Y}, \\ G_{A}(\mathbf{Y})&= \Delta t \left[\left(\mathcal{A}-\frac{1}{4} I_{m} \right)\otimes D \right]\mathbf{Y}   \\ G_{R}(\mathbf{Y})&=0.
\end{align}
\end{subequations}
}

Unlike the implicit Euler method, we see that in the case of SDIRK4 
Algorithm \ref{alg:explicitimplicit}  doesn't coincide with Algorithm 
\ref{alg:rkcd}, hence Proposition \eqref{prop:quad_RKC} does not 
directly apply. Nevertheless,  as we observe\footnote{we have used the 
same parameters $\varepsilon, \Delta x, \Delta t$ as for the implicit 
Euler} in  Figure \ref{fig:3dLaplace} given $y_{n}$, the cost to 
approximate $y_{n+1}$ with Algorithm \ref{alg:3} scales again 
proportionally to $\mathcal{O}(\sqrt{\kappa})$  independently of the 
value of $N$, where $\kappa$ is the condition number of $G_{D}$. We will prove in Theorem \ref{thm:diff} that this is indeed a property of Algorithm \ref{alg:3} for the implementation of an $m$-stage SDIRK method in the pure linear diffusion case, equation \eqref{eq:pure_diffusion}.

\subsubsection{Why the partitioning?}
We now discuss in detail why the partitioning \eqref{eq:part} is necessary for the construction of   Algorithm \ref{alg:3} and why applying  Algorithm \ref{alg:rkcd} without the gradient assumption to \eqref{eq:gff} would fail in general, even in the simplest possible case of pure linear diffusion where $F_{D}, F_{R}, F_{A}$ are given by \eqref{eq:pure_diffusion}.  More precisely, then  \eqref{eq:gff} becomes
\begin{equation}\label{eq:naive_sdirk4}
\dot{ \boldsymbol{Y}}(t)= \mathbbm{1}_{m} \otimes y_{n} +\mathcal{V} \boldsymbol{Y}(t), 
\quad 
\mathcal{V}\coloneqq-I_{md}-\Delta t (\calA \otimes D).
\end{equation}
Note that because $\calA$ is lower triangular with all entries in the diagonal equal to $\gamma$ when $m \geq 2$ neither $\calA$ nor $\mathcal{V}$ can be put 
in diagonal form. In addition, the eigenvalues of $\mathcal{V}$ all have multiplicity $m$ and are given by 
\[
\lambda_{j}=-1-\Delta t \gamma \lambda_{\text{heat},j}  
\]
where $\lambda_{\text{heat},j}$ are the eigenvalues of $D$, and the corresponding eigenspace associated to $\lambda_{j}$ has dimension $1$. 

We start by considering the case of Implicit Euler ($\gamma=1$, $m=1$)  for $\Delta t=5\times 10^{-2}, N=1$. Since now $G_{A}=G_{R}=0$ as discussed above Algorithm \ref{alg:3}  coincides with Algorithm \ref{alg:rkcd} and thus  Proposition \ref{prop:quad_RKC} applies. This is indeed the case up to round-off errors as we can see in Figure \ref{fig:iterations} where we plot the error between the iterations $x_{k}$ for Algorithm \ref{alg:3} and  the exact solution $\boldsymbol{Y}_{n+1}$ calculated using standard Newton iteration.  

If we now consider  the case of SDIRK4 ($\gamma=1/4$ and $m=5$), $G_{A} \neq 0$ and then \eqref{eq:gff} even though linear in $\boldsymbol{Y}$ is not of gradient form. As we can  see in  Figure \ref{fig:iterations}, and we  also prove in Theorem \ref{thm:diff}, the partitioning \eqref{eq:part} is essential for ensuring the convergence of iterates, since the iterates calculated with the non-partitioned Algorithm \ref{alg:rkcd} {(corresponding to Algorithm \ref{alg:3} with 
the definition of $G(\boldsymbol{x})$ replaced by $G(\boldsymbol{x})=G_D(\boldsymbol{x})+G_A(\boldsymbol{x})$ and with $G_R(\boldsymbol{x})=0$, or equivalently applying Algorithm \ref{alg:rkcd} to equation \eqref{eq:gff} with $F_A=F_R=0$)} suffer from   high-round off errors  that   deteriorate as stiffness increases {(see  Appendix \ref{appendix:naive} for details).} In contrast, iterates calculated with Algorithm \ref{alg:3} do not suffer from these issues. 
 \begin{figure}
    \centering
    \begin{subfigure}{0.48\linewidth}
    \centering
\includegraphics[width=\textwidth]{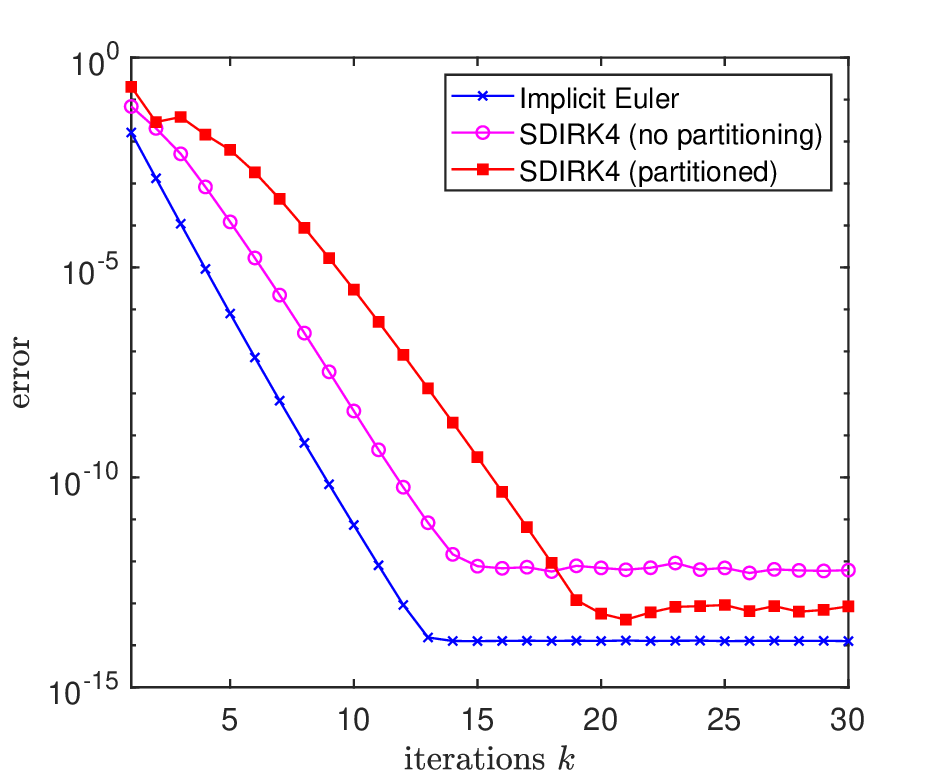}
    \caption{$\Delta x= 10^{-2},\kappa=0.2\cdot 10^{4}$.}
    \end{subfigure}
    \begin{subfigure}{0.48\linewidth}
    \centering
    \includegraphics[width=\textwidth]{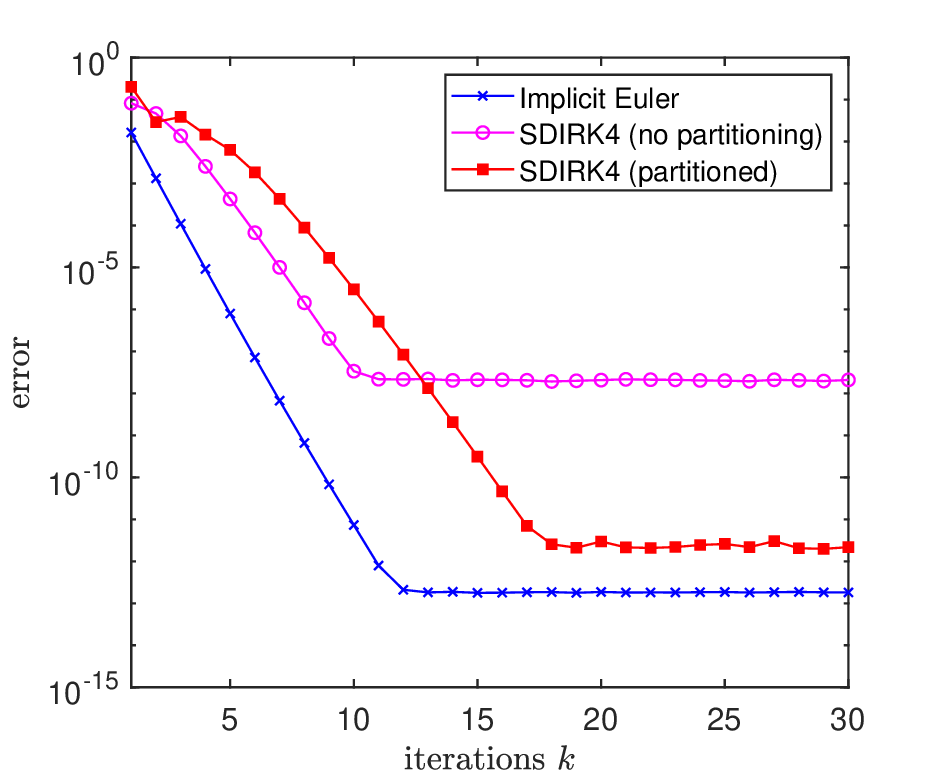}
    \caption{$\Delta x= 10^{-3},\kappa=0.2\cdot 10^{6}$.}
    \end{subfigure}
    \caption{Behaviour of the error  for Algorithm \ref{alg:3} and the non-partitioned method}
    
    \label{fig:iterations}
\end{figure}

We  explain this difference in depth in Section \ref{appendix:naive}, but in a nutshell, the issue is caused by the fact that the matrix $\mathcal{V}$ in \eqref{eq:naive_sdirk4}  is non-symmetric when $m \geq 2$  which creates issues for the non-partitioned method.  In the case of the partitioning \eqref{eq:part}, 
we obtain
\[
G_{D}(\boldsymbol{Y})=\mathbbm{1}_m \otimes y_n +\mathcal{V}_{1}\boldsymbol{Y},\qquad G_{A}(\boldsymbol{Y})=\mathcal{V}_{2}\boldsymbol{Y},\qquad
G_{R}(\boldsymbol{Y}) =0,
\]
where  the Jacobian $\mathcal{V}$ is decomposed in two parts $\mathcal{V}=\mathcal{V}_{1}+\mathcal{V}_{2}$ as
\begin{eqnarray*}
    \mathcal{V}_{1} &=& -I_{m}\otimes \hat{\mathcal{V}}_{1}, \quad \hat{\mathcal{V}_{1}}= (I_{d}+\gamma \Delta t D), \\
    \mathcal{V}_{2} &=&-\left(\mathcal{A}-\gamma I_{m} \right)\otimes \hat{\mathcal{V}}_{2}, \quad \hat{\mathcal{V}}_{2}=-\Delta t D.
\end{eqnarray*}
By construction, $\mathcal{V}_{1}$ is symmetric and has the same eigenvalues as $\mathcal{V}$, while $\mathcal{V}_{2}$ is non-symmetric and nilpotent. 

The iterates $\boldsymbol{x}_{k}$ for Algorithm \ref{alg:3} satisfy the following recurrence relation
\begin{eqnarray} 
    \boldsymbol{x}_{k+1}
   & = &
    R_s(h \mathcal{V}_{1})
    \boldsymbol{x}_k
   +
   hB_s(h \mathcal{V}_1)
   (\mathbbm{1}_m \otimes y_n +
	\mathcal{V}_2 \boldsymbol{x}_k) \label{eq:it_rkcd} \nonumber \\
&=&
    \boldsymbol{x}_k+ h B_s(h \mathcal{V}_{1})
   ( \mathbbm{1}_m \otimes y_n +
	\mathcal{V}\boldsymbol{x}_{k}) 
    \label{eq:it_rkcd1}
\end{eqnarray}
where $B_s(z)=(R_s(z)-1)/z$, and we used $R_s(z)=1+B_s(z)z$. It is important to note that the steady state of \eqref{eq:it_rkcd1} coincides with the steady state of \eqref{eq:naive_sdirk4} as desired. Furthermore, as we can see in \eqref{eq:it_rkcd}, only the symmetric part of the matrix {$\hat{\mathcal{V}}$} is taken as an argument of the stability function $R_{s}$, while the non-diagonalizable part {$\hat{\mathcal{V}}_2$} is stabilised by $B_s$. This is, in fact, key for establishing the convergence properties of Algorithm \ref{alg:3} as detailed in the next section. 
\begin{remark}
Another benefit of the splitting $\mathcal{V}=\mathcal{V}_1+\mathcal{V}_2$ is that it allows for a parallel implementation of the stabilization terms $ R_s(h \mathcal{V}_{1})$ and $B_s(h \mathcal{V}_{1})$ over $m$ processors due to their block diagonal structures. Note that a similar splitting was proposed in \cite{VSC92} with the motivation of parallelizing the  iterates of a quasi-Newton implementation of SDIRK methods.
\end{remark}


\subsubsection{Theoretical analysis of Algorithm \ref{alg:explicitimplicit}} \label{subsubsec:analysis}

\begin{theorem}\label{thm:diff}
Consider Algorithm \ref{alg:explicitimplicit} for the implementation of an $m$-stage SDIRK  method applied to ODE \eqref{eq:main}  where  $F_{D}, F_{A}, F_{R}$ are given by \eqref{eq:pure_diffusion}.
Then, for all $\varepsilon>0$ there exists $C_\varepsilon$ (independent of the dimension $d$) such that the iterates $\boldsymbol{x}_k$ of Algorithm \ref{alg:explicitimplicit} converge to the
SDIRK solution $\boldsymbol{Y}_{n+1}$ of \eqref{eq:nonlinearimp} with the following convergence estimate,
\begin{equation} \label{eq:proof1}
\|
\boldsymbol{x}_k - \boldsymbol{Y}_{n+1}
\|_2 \leq C_\varepsilon \big(\alpha_{s}(\eta) + \varepsilon\big)^k 
\|
\boldsymbol{x}_0 - \boldsymbol{Y}_{n+1}
\|_2.
\end{equation}
\end{theorem}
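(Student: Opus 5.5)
Subtracting the fixed-point relation from the recurrence \eqref{eq:it_rkcd1} gives a linear error iteration for $\boldsymbol{e}_k=\boldsymbol{x}_k-\boldsymbol{Y}_{n+1}$. Since $\mathbbm{1}_m\otimes y_n+\mathcal{V}\boldsymbol{Y}_{n+1}=0$, the iterates satisfy $\boldsymbol{e}_{k+1}=M\boldsymbol{e}_k$ with
\[
M = R_s(h\mathcal{V}_1)+hB_s(h\mathcal{V}_1)\mathcal{V}_2 = I_m\otimes R_s(-h\hat{\mathcal{V}}_1) + \bigl(\mathcal{A}-\gamma I_m\bigr)\otimes\bigl(hB_s(-h\hat{\mathcal{V}}_1)\,\Delta t D\bigr).
\]
The plan is to exploit the common eigenbasis of $D$. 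The matrix $D$ is symmetric, with eigenvalues $\lambda_{\text{heat},j}\in[0,\lambda_{\text{heat},max}]$ and an orthonormal eigenbasis $Q$ ($d\times d$). The matrices $\hat{\mathcal{V}}_1=I_d+\gamma\Delta t D$, $R_s(-h\hat{\mathcal{V}}_1)$ and $B_s(-h\hat{\mathcal{V}}_1)\Delta t D$ are all diagonalized by $Q$. Conjugating by $I_m\otimes Q$ is an $\ell^2$-isometry, and after a permutation it turns $M$ into a block-diagonal matrix. The blocks are the $m\times m$ matrices
\[
M_j = r_j I_m + \beta_j N,\qquad N:=\mathcal{A}-\gamma I_m \ \text{(strictly lower triangular, hence } N^m=0),
\]
with $\mu_j=1+\gamma\Delta t\lambda_{\text{heat},j}\in[1,L]$, $r_j=R_s(-h\mu_j)$ and $\beta_j=hB_s(-h\mu_j)\Delta t\lambda_{\text{heat},j}$. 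Hence $\|M^k\|_2=\max_j\|M_j^k\|_2$, and it suffices to bound $\|M_j^k\|_2$ uniformly in $j$, and hence in $d$.

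Because $r_jI_m$ and $\beta_jN$ commute and $N$ is nilpotent, the binomial expansion terminates:
\[
M_j^k=\sum_{i=0}^{m-1}\binom{k}{i} r_j^{k-i}\beta_j^i N^i .
\]
For the diagonal factor, the choice of $s,h$ (as in Proposition \ref{prop:quad_RKC}, with $\ell=1$ and $L=1+\gamma\Delta t\lambda_{max}$) puts $-h\mu_j$ in $[-L_{s,\eta},-\delta_\eta]$, where $|R_s|\le\alpha_s(\eta)$. So $|r_j|\le\alpha_s(\eta)$.

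The key step is a uniform bound on $\beta_j$. We have
\[
\beta_j=\bigl(R_s(-h\mu_j)-1\bigr)\cdot\frac{-\Delta t\lambda_{\text{heat},j}}{\mu_j}=\frac{1-r_j}{\gamma}\cdot\frac{\gamma\Delta t\lambda_{\text{heat},j}}{1+\gamma\Delta t\lambda_{\text{heat},j}},
\]
so $|\beta_j|\le (1+\alpha_s(\eta))/\gamma\le 2/\gamma$, independently of $d$, $\Delta x$ and $s$. This is exactly where the partitioning pays off: the non-normal part enters only through $B_s$ multiplied by $\Delta t D$, and this product is tamed by the factor $1/\mu_j$. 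It follows that
\[
\|M_j^k\|_2\le\sum_{i=0}^{m-1}\binom{k}{i}\alpha_s(\eta)^{k-i}(2/\gamma)^i\|N\|_2^i \le P(k)\,\alpha_s(\eta)^{k},
\]
where $P$ is a polynomial of degree $m-1$ whose coefficients depend only on $\mathcal{A}$, $\gamma$ and $\alpha_s(\eta)^{-1}$. If $\alpha_s(\eta)$ is bounded away from zero, the last constant is controlled; otherwise one keeps the terms with $\alpha^{k-i}$ and $k\ge i$.

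The final step is to absorb the polynomial factor. For any $\varepsilon>0$ we use
\[
P(k)\alpha^k=\Bigl(P(k)\bigl(\tfrac{\alpha}{\alpha+\varepsilon}\bigr)^k\Bigr)(\alpha+\varepsilon)^k\le C_\varepsilon(\alpha+\varepsilon)^k,
\]
with $C_\varepsilon=\sup_k P(k)(\alpha/(\alpha+\varepsilon))^k<\infty$. This gives \eqref{eq:proof1}. Since $\alpha_s(\eta)\le1$, one could also take $C_\varepsilon$ independent of $s$ by using $\alpha/(\alpha+\varepsilon)\le 1/(1+\varepsilon)$. The main obstacle is handling the non-normality. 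A spectral-radius argument alone gives only an asymptotic rate, and the constant could blow up with stiffness, as it does for the naive method. The explicit nilpotent expansion together with the uniform bound on $\beta_j$ is what secures a $d$-independent $C_\varepsilon$. I would double-check that $h\mu_j$ indeed stays in the damped region for every $j$, including $\lambda_{\text{heat},j}=0$, which corresponds to $\mu_j=1=\ell$.
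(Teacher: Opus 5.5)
Your proof is correct, but it takes a different route from the paper's. The paper never diagonalises $D$. It writes $\mathcal{A}=P\mathcal{J}P^{-1}$ in Jordan form and conjugates the iteration matrix by $PQ_\varepsilon\otimes I_d$, with $Q_\varepsilon=\mathrm{diag}(1,\varepsilon,\ldots,\varepsilon^{m-1})$. This shrinks the nilpotent part to $\varepsilon F\otimes B_s(h\hat{\mathcal{V}}_1)h\hat{\mathcal{V}}_2$, so the transformed matrix has $2$-norm at most $\alpha_s(\eta)+\varepsilon\|B_s(h\hat{\mathcal{V}}_1)h\hat{\mathcal{V}}_2\|_2$, and $C_\varepsilon=\kappa_2(PQ_{\varepsilon'})$ is the cost of undoing the similarity.

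You instead use the orthonormal eigenbasis of $D$ to reduce to the $m\times m$ blocks $r_jI_m+\beta_jN$, and then exploit $N^m=0$ through a terminating binomial expansion. Both proofs rest on the same two facts: $|R_s|\le\alpha_s(\eta)$ on the spectrum of $-h\hat{\mathcal{V}}_1$, and a $d$-uniform bound on $B_s(h\hat{\mathcal{V}}_1)h\hat{\mathcal{V}}_2$. Your bound $|\beta_j|\le 2/\gamma$ is slightly cleaner than the paper's $\gamma^{-1}(2+\eta e^\eta/\ell)$, since it does not need \eqref{eq:h_bound}.

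What each route buys:
\begin{itemize}
\item Yours gives the explicit estimate $P(k)\alpha_s(\eta)^k$ with $\deg P\le m-1$. This shows the $\varepsilon$ is only the price of absorbing a polynomial factor, and it needs no Jordan form or single-block simplification.
\item The paper's is a pure norm argument that needs no common eigenbasis. That is why it carries over verbatim to Theorem \ref{thm:ad} when $D$ and $E$ do not commute. Your expansion can be adapted there too, by bounding words in $I_m\otimes X$ and $N\otimes Y$, since every word containing $m$ or more copies of $N$ vanishes.
\end{itemize}

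There are three small points to fix.
\begin{itemize}
\item Uniformity in $s$ is not optional. The stage number $s$ grows with $\kappa=1+\gamma\Delta t\lambda_{\text{heat},max}$, hence with $\Delta x^{-1}$ and $d$, so $C_\varepsilon$ must not depend on $\alpha_s(\eta)^{-1}$. Your two hints do close this, and one of them should be written into the proof. Either use the termwise bound $\binom{k}{i}\alpha^{k-i}c^i\le(\alpha+\varepsilon)^k\binom{k}{i}(1+\varepsilon)^{-(k-i)}(c/\varepsilon)^i$ for $\alpha\le 1$. Or use $\alpha_s(\eta)\ge 1/\cosh\sqrt{2\eta}$, which follows from $\mathrm{arccosh}(1+x)\le\sqrt{2x}$.
\item The check you flagged passes. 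With $\ell=1$ the Chebyshev argument is $\omega_0-(\omega_0-1)\mu_j$. This lies in $[-1,1]$ for $\mu_j\in[1,\kappa]$ because $s^2\ge(\kappa-1)\eta/2$, and $\mu_j=1$ gives exactly $r_j=\alpha_s(\eta)$.
\item Strictly, $\mathcal{V}_2=-\Delta t(\mathcal{A}-\gamma I_m)\otimes D$, since $F_D=-Dy$. The sign you inherited only flips $\beta_j$ and is harmless for the norm bounds.
\end{itemize}
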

\begin{proof}
Using \eqref{eq:it_rkcd}, we obtain,
\begin{equation}
\boldsymbol{x}_{k+1} - \boldsymbol{Y}_{n+1}= M (\boldsymbol{x}_{k}- \boldsymbol{Y}_{n+1}),\qquad M=  R_{s}(h\mathcal{V}_1) + B_{s}(h\mathcal{V}_1)h\mathcal{V}_2,
\end{equation}
where   $B_{s}(z)=(R_{s}(z)-1)/z$. In order to prove \eqref{eq:proof1} we need to have an estimate of $\norm{M}$. The main difficulty in doing so is the term $\mathcal{V}_{2}$, as this is not diagonalizable.  To address this issue, we construct an appropriate change of coordinates which makes this term negligible. Precisely, we consider the Jordan decomposition of the Runge-Kutta matrix $\calA$ of coefficients,
$
\calA = P\mathcal{J}P^{-1}
$
where $P\in\R^{m\times m}$ is an invertible matrix, and $\mathcal{J}\in\R^{m\times m}$ is a Jordan block matrix of the form
\begin{equation}\label{eq:defJ}
\mathcal{J}= 
			\begin{pmatrix}
				\gamma & 1 & 0& \cdots & 0\\
				0& \gamma & 1& \cdots & 0\\
				\vdots&\vdots&\ddots&\\
				0& 0 & & \ddots & 1 \\
				0& 0 &  &\cdots & \gamma \\
			\end{pmatrix} =\gamma I_m + F. 
\end{equation}
Here, we assume for simplicity of the presentation that there is a single Jordan block $\mathcal{J}$, but the proof generalises straightforwardly to the case of multiple Jordan blocks.

Now let $Q_\varepsilon=\mathrm{diag}(1,\varepsilon,\varepsilon^2,\ldots, \varepsilon^{m-1})$.
Using 
		$
		Q_\varepsilon^{-1} F Q_\varepsilon = \varepsilon F
		$,
we deduce
$
Q_\varepsilon^{-1}P^{-1} \calA PQ_\varepsilon =  \gamma \mathbbm{1}_m +  \varepsilon F
$
and
$$
(Q_\varepsilon^{-1}P^{-1}\otimes I_d) M (PQ_\varepsilon\otimes I_d) =  R_s(h\mathcal{V}_1) - \varepsilon   B_s(h\mathcal{V}_1) [F \otimes h\hat{\mathcal{V}}_{2}],
$$
where we have used that $(A \otimes B)(C \otimes D)=AC \otimes BD$ and that the matrix $PQ_\varepsilon\otimes I_d$ and its inverse both commute with $R(h\mathcal{V}_1)=I_m \otimes R_s(h\hat{\mathcal{V}}_{1})$ and $B_s(\mathcal{V}_1)$. This yields
\begin{eqnarray}
\|\boldsymbol{x}_k-\boldsymbol{Y}_{n+1}\|_2 &=& \|M^k(\boldsymbol{x}_0-\boldsymbol{Y}_{n+1})\|_2 \nonumber \\
&\leq& \kappa_2(PQ_\varepsilon)
(\|R(h\hat{\mathcal{V}}_{1})\|_2 + \varepsilon \|F\|_2 \|B_{s}(h\hat{\mathcal{V}}_{1})h\hat{\mathcal{V}}_{2}\|_2)^k \|\boldsymbol{x}_0-\boldsymbol{Y}_{n+1}\|_2
\end{eqnarray}
where $\kappa_2(PQ_\varepsilon)=\|PQ_\varepsilon\|_2\|(PQ_\varepsilon)^{-1}\|_2$ is the condition number of the matrix $PQ_\varepsilon\in\R^{m\times m}$, which is independent of $d$.
{ We now have (see \cite[Lemma A.1]{EVVZ21})
\begin{equation} \label{eq:RBone}
\|R_{s}(h\hat{\mathcal{V}}_1)\|_2 \leq \alpha_s(\eta)<1,\qquad \|B_{s}(h\hat{\mathcal{V}}_1)\|_2 \leq 1, 
\end{equation}
In addition, we have   $\|F\|_2 = 1$, 
 $\hat{\mathcal{V}}_2=\gamma^{-1}(I_d -\hat{\mathcal{V}}_1)$,
and $h\leq\eta e^\eta/\ell$ using \eqref{eq:h_bound},
which yields $\|B(h\hat{\mathcal{V}}_1)h\hat{\mathcal{V}}_1\|_2 = \|R_s(h\hat{\mathcal{V}}_1) -I_d\|_2 \leq 2$, and 
\begin{equation} \label{eq:us_bound}
\|B_s(h\hat{\mathcal{V}}_1)h \hat{\mathcal{V}}_2\|_2 \leq  \|h \hat{\mathcal{V}}_2\|_2 \leq\gamma^{-1}(2  + \eta e^\eta/\ell )
\end{equation}
}
and hence
\begin{eqnarray*}
\|\boldsymbol{x}_k-\boldsymbol{Y}_{n+1}\|_2 &\leq& \kappa_2(PQ_\varepsilon)
(\alpha_s(\eta) + \varepsilon \gamma^{-1}(2  + \eta e^\eta/\ell ))^k \|\boldsymbol{x}_0-\boldsymbol{Y}_{n+1}\|_2 \\
&\leq&C_\varepsilon
(\alpha_s(\eta) + \varepsilon)^k \|\boldsymbol{x}_0-\boldsymbol{Y}_{n+1}\|_2 
\end{eqnarray*}
where $C_\varepsilon = \kappa_2(PQ_{\varepsilon'})$,
with
$\varepsilon' = \varepsilon\gamma(2  + \eta e^\eta/\ell )^{-1} $ is a constant independent of the dimension $d$
and this concludes the proof.
\end{proof}
{In the proof of Theorem \ref{thm:diff}, observe that one can improve the estimate \eqref{eq:us_bound} for the quantity $B(h\hat{\mathcal{V}_1})h \hat{\mathcal{V}}_2$}, but this is not useful here because $\varepsilon$ can be chosen arbitrarily small. 
\begin{remark}
In the case of the implicit Euler method as we discussed before Algorithm \ref{alg:3} is equivalent to Algorithm \ref{alg:rkcd}. This is reflected in the statement of Theorem \ref{thm:diff} since $\varepsilon$=0, $C_{\varepsilon}=1$ in \eqref{eq:proof1} which agrees with \eqref{eq:ratte}  in Proposition \ref{prop:quad_RKC}.    
\end{remark}

\subsection{The case of linear  advection-diffusion}

We now consider  the 
following  linear advection-diffusion equation
\begin{equation} \label{eq:advdiffexact}
\partial_t u=a\Delta u+b \cdot \nabla u, \quad (t,x)\in [0,+\infty)\times (0,1)^{N}
\end{equation}
with periodic boundary conditions and a smooth initial condition $u(x,0)$, where $a>0$ and $b\in \R^N_+$ are fixed.  We  now  discretise in space using a finite difference method with a uniform mesh size $\Delta x$ in each spatial dimension.  We thus have 
\begin{equation} \label{eq:adv_diff}
F_{D}(y)= -Dy, \quad F_{A}(y)=E y \quad F_{R}(y)=0.
\end{equation}
where $D$ is the discrete Laplacian with periodic boundary conditions and $E$ is the discrete centred advection operator.
Following the presentation in \cite{Zb11}  the eigenvalues $\lambda_{\text{heat-adv},j}=\lambda_{\text{heat},j}+i\mu_{\text{adv},j}$ of $F=F_D+F_A$ are located  on the following ellipse in the complex plane
\begin{equation} \label{eq:ellipse_pq}
\left(\frac{2p}\alpha+1\right)^2 + \left(\frac{q}\beta\right)^2=1,
\end{equation}
with half-height $\beta=\norm{b}_{1}\Delta x^{-1}$ and width $\alpha=4aN\Delta x^{-2}$, and where
$p=\lambda_{\text{heat},j}\in[-\alpha,0]$ located on the negative real axis are the eigenvalues of $F_D$ and $iq=i\mu_{\text{adv},j}\in[-i\beta,i\beta]$ on the imaginary axis are the eigenvalues of $F_A$.

Before we present a theorem for an $m$-stage SDIRK method it is useful to introduce the following function
\begin{equation}\label{eq:stabRIE}
\mathcal{R}_{s}(P,Q) =  R_s(P) + B_s(P)Q.
\end{equation}
that will play an important role in establishing the stability and convergence properties in the general case. For simplicity, we start by considering the implicit Euler case for which the iteration error for Algorithm \ref{alg:3} satisfies 
\begin{equation}\label{eq:iterrIE}
x_{k+1}-y_{n+1} = \mathcal{R}_{s}(-hI_d - h\Delta t D,hE)(x_k-y_{n+1})
\end{equation}
and consider the related stability domain 
\begin{equation} \label{eq:stab_domain}
{\mathcal{S}_\gamma} = \{(p,q) \in \mathbb{R}^2\ ;\ R_{\text{stab}}({\gamma} p,q) <1 \}.
\end{equation}
{with $\gamma=1$ for the implicit Euler method and} where
\[
R_{\text{stab}}(p,q)\coloneqq |R_{s}(-h+h p)|^2 + |B_{s}(-h+h p)|^2|h q|^2
\]
We first prove the stability property of Algorithm \ref{alg:3} in the simplest case of the implicit Euler method and based on the stability domain \eqref{eq:stab_domain}. We then show that it also implies the convergence property of Algorithm \ref{alg:3} for an $m$-stage SDIRK method. 

\begin{proposition}\label{prop:rate1}
The error $\norm{x_{k+1}-y_{n+1}}_{2}$ for the implicit Euler method given by \eqref{eq:iterrIE} converges to zero as $k\rightarrow+\infty$ if for all $p_{j}=-\Delta t \lambda_{\text{heat},j}, \ q_{j}=\Delta t\mu_{adv,j}, j=1,\ldots,d$, we have $(p_{j},q_{j}) \in \mathcal{S}_1$. In this case, we have
$$
\norm{x_{k+1}-y_{n+1}}_{2} \leq \rho\norm{x_{k}-y_{n+1}}_{2},\qquad \rho := \max_{j=1,\cdots,d} R_{\text{stab}}(p_j,q_j)^{1/2}<1.
$$
\end{proposition}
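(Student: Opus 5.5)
The proof reduces the iteration matrix to scalar $2\times 2$ blocks by a simultaneous diagonalisation, and then reads the contraction factor off each block.

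For the periodic finite difference discretisation, $D$ is symmetric (a circulant sum in each direction), while the centred advection operator $E$ is skew-symmetric and circulant. Both commute with each other and are diagonalised by the same unitary discrete Fourier matrix $U$. Writing $D=U\,\mathrm{diag}(\lambda_{\text{heat},j})\,U^*$ and $E=U\,\mathrm{diag}(i\mu_{\text{adv},j})\,U^*$, the iteration matrix in \eqref{eq:iterrIE} becomes
\[
\mathcal{R}_s(-hI_d-h\Delta t D,hE)=U\,\mathrm{diag}\bigl(R_s(-h+hp_j)+B_s(-h+hp_j)\,i h q_j\bigr)\,U^*,
\]
where $p_j=-\Delta t\lambda_{\text{heat},j}$ and $q_j=\Delta t\mu_{\text{adv},j}$. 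This uses that $R_s$ and $B_s$ are polynomials, so they act on each Fourier mode through its eigenvalue. Because $U$ is unitary, the $2$-norm of the matrix equals the maximum over $j$ of the modulus of the scalar factor.

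Next I compute the modulus of each scalar factor. The polynomials $R_s$ and $B_s$ have real coefficients and are evaluated at the real point $-h+hp_j$, so $R_s(-h+hp_j)$ and $B_s(-h+hp_j)$ are real. The term $i\,hq_j\,B_s(-h+hp_j)$ is therefore purely imaginary. It follows that
\[
\bigl|R_s(-h+hp_j)+iB_s(-h+hp_j)hq_j\bigr|^2=R_{\text{stab}}(p_j,q_j).
\]
Hence $\norm{x_{k+1}-y_{n+1}}_2\le \max_j R_{\text{stab}}(p_j,q_j)^{1/2}\norm{x_k-y_{n+1}}_2=\rho\norm{x_k-y_{n+1}}_2$.

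If every pair satisfies $(p_j,q_j)\in\mathcal{S}_1$, then each $R_{\text{stab}}(p_j,q_j)<1$. There are only finitely many $j$, so $\rho<1$. Iterating the one-step bound gives $\norm{x_k-y_{n+1}}_2\le\rho^k\norm{x_0-y_{n+1}}_2\to0$.

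The only delicate step is the simultaneous diagonalisation, which requires $D$ and $E$ to commute and to be normal. This holds for periodic boundary conditions and constant coefficients $a$ and $b$, so I would state these properties explicitly for the circulant discretisation. The remaining steps are the elementary modulus computation and the finite maximum.
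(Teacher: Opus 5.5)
Your proof is correct and follows the paper's argument. The paper only notes that periodic boundary conditions make $D$ and $E$ commute, hence $\|\mathcal{R}_s(-hI_d-h\Delta t D,hE)\|_2=\rho<1$. You supply the justification the paper leaves implicit: both matrices are also normal (symmetric and skew-symmetric circulants respectively), so they are simultaneously unitarily diagonalised by the discrete Fourier matrix, and this is what makes the $2$-norm equal the maximum modulus over modes.

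One small bookkeeping point concerns your diagonal entry $B_s(-h+hp_j)\,ihq_j$ with $q_j=\Delta t\mu_{\text{adv},j}$ and $E=U\,\mathrm{diag}(i\mu_{\text{adv},j})U^*$. That entry corresponds to the argument $h\Delta t E$, not the $hE$ printed in \eqref{eq:iterrIE}. The $h\Delta t E$ version is the correct one, since $G_A=\Delta t E y$ for implicit Euler, so you should write $h\Delta t E$ there.
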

\begin{proof}
If $p_{j},q_{j} \in \mathcal{S}_1$ for all $j$ we have that
 $R_{\text{stab}}(p_{j},q_{j})<1$. Now, since we assume periodic boundary conditions, the matrices   $D,E$ commute which implies that  
$ 
 \|\mathcal{R}_{s}(-hI_d - h\Delta t D,hE) \|_2 = \rho
 <1,
$
which yields the convergence for the error $\norm{x_{n+1}-y_{n+1}}_{2}$ in \eqref{eq:iterrIE}.     
\end{proof}

In proving Proposition \ref{prop:rate1}, we have explicitly taken into account that because of the periodic boundary conditions, the matrices $D$ and  $E$ are commuting. This will, in general, not hold, for example in the case of Neumann or Dirichlet boundary conditions. It is still possible to establish a stability criterion, as stated in the next proposition. 

\begin{proposition} \label{prop:rate2}
Consider \eqref{eq:main} with $F_D,F_A,F_R$ given  in \eqref{eq:adv_diff} and where $D$ is a given positive semi-definite matrix and $E$ is skew symmetric matrix with bound $\norm{E} \leq \norm{b}_{1}\Delta x^{-1}$. Then the  error $\norm{x_{k+1}-y_{n+1}}_{2}$ for Algorithm \ref{alg:3} applied with the implicit Euler method given by \eqref{eq:iterrIE} converges to zero as $k\rightarrow+\infty$ if 
 \begin{equation} \label{eq:rate_estimate}
 \hat \rho := \alpha_s(\eta) + \frac{\eta e^\eta}{\ell}\norm{b}_{1}\frac{\Delta t}{\Delta x} < 1.
 \end{equation}
In this case we have
\[
\norm{x_{k+1}-y_{n+1}}_{2} \leq \hat \rho\norm{x_{k}-y_{n+1}}_{2}.
\]
\end{proposition}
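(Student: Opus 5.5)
The plan is to bound the operator norm of the iteration matrix in \eqref{eq:iterrIE} directly with the triangle inequality, since $D$ and $E$ need not commute. Let $\hat{\mathcal{V}}_1 = I_d + \Delta t D$, which is symmetric positive definite with eigenvalues in $[1, 1+\Delta t\lambda_{\max}]$; here $\ell=1$ and $L=1+\Delta t\lambda_{\max}$ as in Algorithm \ref{alg:3}. The iteration error satisfies $x_{k+1}-y_{n+1} = M(x_k-y_{n+1})$ with
\[
M=\mathcal{R}_s(-h\hat{\mathcal{V}}_1, hE) = R_s(-h\hat{\mathcal{V}}_1) + B_s(-h\hat{\mathcal{V}}_1)\, hE .
\]
This identity follows exactly as in \eqref{eq:it_rkcd}. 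Because $G_D$ has the symmetric Jacobian $-\hat{\mathcal{V}}_1$ and $G_A(x)=\Delta t E x$, the Chebyshev recursion in Algorithm \ref{alg:3} produces $R_s$ acting on the $G_D$ part and $B_s$ acting on the frozen part. Note that the $Q$ argument in \eqref{eq:iterrIE} should then carry the factor $\Delta t$, namely $h\Delta t E$. I will keep track of that factor, because it is exactly what gives the ratio $\Delta t/\Delta x$ in \eqref{eq:rate_estimate}.

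I then take norms and write
\[
\|M\|_2 \le \|R_s(-h\hat{\mathcal{V}}_1)\|_2 + \|B_s(-h\hat{\mathcal{V}}_1)\|_2\, h\Delta t\,\|E\|_2 .
\]
Since $\hat{\mathcal{V}}_1$ is symmetric, both matrix functions are symmetric, and their $2$-norms are the maxima of $|R_s|$ and $|B_s|$ over the spectrum $-h\lambda$ with $\lambda\in[\ell,L]$. With the parameter choice \eqref{eq:defn of parameters1}, the interval $[-hL,-h\ell]$ lies inside $[-L_{s,\eta},-\delta_\eta]$. The bounds \eqref{eq:RBone} from \cite[Lemma A.1]{EVVZ21} therefore give $\|R_s\|_2\le\alpha_s(\eta)$ and $\|B_s\|_2\le 1$. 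The remaining factors are handled by the lemma \eqref{eq:h_bound}, which gives $h\le \eta e^\eta/\ell$, and by the hypothesis $\|E\|_2\le\|b\|_1\Delta x^{-1}$.

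Combining these estimates gives $\|M\|_2\le \alpha_s(\eta)+\frac{\eta e^\eta}{\ell}\|b\|_1\frac{\Delta t}{\Delta x}=\hat\rho$. This yields the one-step contraction $\|x_{k+1}-y_{n+1}\|_2\le\hat\rho\|x_k-y_{n+1}\|_2$. Iterating it gives convergence to zero whenever $\hat\rho<1$.

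The one step I expect to need care is verifying $\|B_s\|_2\le1$ on the relevant spectral interval. The difficulty is that $B_s(z)=(R_s(z)-1)/z$, and near $z=-h\ell$ the function $B_s$ is not obviously bounded by $1$. My first attempt is to use convexity of $R_s$ on $[-L_{s,\eta},0]$ together with $R_s(0)=1$ and $R_s'(0)=1$. These give $|R_s(z)-1|\le|z|$ there, hence $|B_s|\le1$. If that argument has a gap, I will simply invoke \eqref{eq:RBone}, as the paper already does in the proof of Theorem \ref{thm:diff}. Skew-symmetry of $E$ is not needed for this crude bound; only its norm enters.
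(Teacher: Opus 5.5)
Your proposal is correct and is essentially the paper's proof: you bound $\|R_s(-h\hat{\mathcal{V}}_1)+B_s(-h\hat{\mathcal{V}}_1)\,h\Delta t E\|_2$ by the triangle inequality using \eqref{eq:RBone}, \eqref{eq:h_bound} and $\|E\|_2\le\|b\|_1\Delta x^{-1}$, and you are right that the $\Delta t$ factor missing from \eqref{eq:iterrIE} is the one the paper's proof actually uses. Your tentative convexity argument for $|B_s|\le 1$ does not work, because for $s\ge 3$ the polynomial $R_s$ equioscillates between $\pm\alpha_s(\eta)$ on $[-L_{s,\eta},-\delta_\eta]$ and so is not convex there; your fallback of invoking \eqref{eq:RBone} is exactly what the paper does.
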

\begin{proof} 
Using \eqref{eq:RBone}
and $\norm{E} \leq \norm{b}_{1}\Delta x^{-1}$, 
we obtain from \eqref{eq:iterrIE}, $ \|\mathcal{R}_{s}(-hI_d - h\Delta t D,hE) \|_2\leq  \alpha_s(\eta) + h \Delta t \norm{E} \leq \hat  \rho <1$ where we use  \eqref{eq:h_bound},\eqref{eq:rate_estimate} and the convergence holds.
\end{proof}

We now state the general result, that the estimate for implicit Euler generalises for an arbitrary $m$-diagonal SDIRK method.


\begin{theorem}\label{thm:ad}
Consider Algorithm \ref{alg:3} for the implementation of an $m$-stage SDIRK Runge-Kutta method applied to ODE \eqref{eq:main} where $F_{D},F_{A},F_{R}$ are given by \eqref{eq:adv_diff}. Assume that $D$ and $E$ satisfy either the assumptions of Proposition \ref{prop:rate2} or the stronger assumptions of Proposition \ref{prop:rate1}, and
{assume that at least one of the following bounds hold}
{\begin{align*}
\hat \rho:&= \alpha_s(\eta)+ \frac{\eta e^\eta}{\ell}\norm{b}_{1}\frac{\Delta t}{\Delta x}<1,
 \\
\rho:&= \max_{j=1,\ldots,d} \left(\mathcal{R}_s(-h-h\gamma\Delta t \lambda_{\text{heat},j},h\Delta t \mu_{\text{adv},j})^{1/2}\right)<1.
\end{align*}}
Then, for all $\varepsilon>0$ there exists $C_\varepsilon$ (independent of the dimension $d$)  such that the iterates $\boldsymbol{x}_k$ of Algorithm \ref{alg:explicitimplicit} converge to the
SDIRK solution $\boldsymbol{Y}_{n+1}$ of \eqref{eq:nonlinearimp} with the following convergence estimate,
    \begin{equation} \label{eq:boundadv}
\|\boldsymbol{x}_k-\boldsymbol{Y}_{n+1}\|_2 \leq C_\varepsilon
\left(\varepsilon+ \min(\rho,\hat \rho)
\right)^k \|\boldsymbol{x}_0-\boldsymbol{Y}_{n+1}\|_2.
\end{equation} 
\end{theorem}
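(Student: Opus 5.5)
Our plan follows the proof of Theorem \ref{thm:diff}, with the advection term added to the nilpotent perturbation. In the linear advection-diffusion case, $G_D(\boldsymbol{Y})=\mathbbm{1}_m\otimes y_n+\mathcal{V}_1\boldsymbol{Y}$ with $\mathcal{V}_1=-I_m\otimes(I_d+\gamma\Delta t D)$. The remaining linear part is
\[
G_A(\boldsymbol{Y})=\bigl(-(\calA-\gamma I_m)\otimes \Delta t D+\calA\otimes \Delta t E\bigr)\boldsymbol{Y},
\]
and $G_R=0$, so $J=I_{md}$. As in \eqref{eq:it_rkcd}, the iteration error then satisfies $\boldsymbol{x}_{k+1}-\boldsymbol{Y}_{n+1}=M(\boldsymbol{x}_k-\boldsymbol{Y}_{n+1})$ with
\[
M=R_s(h\mathcal{V}_1)+B_s(h\mathcal{V}_1)\,h\bigl(-(\calA-\gamma I_m)\otimes \Delta t D+\calA\otimes \Delta t E\bigr).
\]
We write $\calA=\gamma I_m+(\calA-\gamma I_m)$ in the advection part and regroup:
\[
M=I_m\otimes \mathcal{R}_s\bigl(-hI_d-h\gamma\Delta tD,\ \gamma h\Delta t E\bigr)+B_s(h\mathcal{V}_1)\bigl[(\calA-\gamma I_m)\otimes h\Delta t(E-D)\bigr].
\]
The first term is block diagonal and is exactly the implicit-Euler-type amplification matrix \eqref{eq:iterrIE}, with $\Delta t$ replaced by $\gamma\Delta t$ in the diffusion. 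The second term contains the strictly lower triangular, hence nilpotent, factor $\calA-\gamma I_m$.

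Next we apply the change of coordinates $PQ_\varepsilon\otimes I_d$ from the proof of Theorem \ref{thm:diff}, using the Jordan form $\calA=P(\gamma I_m+F)P^{-1}$. This matrix commutes with every matrix of the form $I_m\otimes X$, in particular with $I_m\otimes\mathcal{R}_s(\cdot,\cdot)$ and $B_s(h\mathcal{V}_1)=I_m\otimes B_s(h\hat{\mathcal{V}}_1)$. It conjugates $\calA-\gamma I_m$ to $\varepsilon F$. Hence
\[
\|M^k\|_2\le \kappa_2(PQ_\varepsilon)\Bigl(\|\mathcal{R}_s(\cdot,\cdot)\|_2+\varepsilon\|F\|_2\,\|B_s(h\hat{\mathcal{V}}_1)\,h\Delta t(E-D)\|_2\Bigr)^k .
\]
We bound the perturbation as follows. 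We use $\|B_s(h\hat{\mathcal V}_1)\|_2\le 1$ from \eqref{eq:RBone}, and the bound $\|B_s(h\hat{\mathcal V}_1)h\Delta tD\|_2\le\gamma^{-1}(2+\eta e^\eta/\ell)$ from \eqref{eq:us_bound}. For the advection part we use \eqref{eq:h_bound}: $h\Delta t\|E\|_2\le \eta e^\eta\|b\|_1\Delta t/(\ell\Delta x)$, which is below $1$ under the $\hat\rho$ assumption. If only the $\rho$ assumption holds, we instead factor $h\Delta tE$ through the block-diagonal part: under the commuting hypothesis of Proposition \ref{prop:rate1}, $B_s(h\hat{\mathcal V}_1)h\Delta tE$ is normal with eigenvalues of modulus at most $\sqrt{R_{\text{stab}}}\le 1$ in the relevant modes. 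In either case the perturbation is a $d$-independent constant $K$, and $\varepsilon K$ can be made arbitrarily small by rescaling $\varepsilon$, exactly as at the end of the proof of Theorem \ref{thm:diff}.

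It remains to bound the main term $\|\mathcal{R}_s(-hI_d-h\gamma\Delta tD,\gamma h\Delta tE)\|_2$ by $\min(\rho,\hat\rho)$ whenever the corresponding assumption holds.
\begin{itemize}
\item The $\hat\rho$ bound follows exactly as in Proposition \ref{prop:rate2}: $\|R_s\|_2\le\alpha_s(\eta)$, $\|B_s\|_2\le1$, and $\gamma h\Delta t\|E\|_2\le h\Delta t\|E\|_2$ provided $\gamma\le1$. If $\gamma>1$ we absorb the factor $\gamma$ into the constant, which is the one point to check.
\item The $\rho$ bound follows as in Proposition \ref{prop:rate1}: when $D$ and $E$ commute, with $D$ symmetric and $E$ skew-symmetric, they are simultaneously unitarily diagonalizable. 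The $2$-norm is then the maximum over modes of $|R_s(\cdot)+B_s(\cdot)\,i q_j|$. Since $R_s$ and $B_s$ are real on real arguments, the squared modulus of each mode equals $R_{\text{stab}}$, giving $\rho$.
\end{itemize}
Combining with $C_\varepsilon=\kappa_2(PQ_{\varepsilon'})$ for a suitably rescaled $\varepsilon'$ yields \eqref{eq:boundadv}. We expect the main obstacle to be the careful regrouping of $M$: we must keep the $\gamma$-scaled advection inside $\mathcal{R}_s$ while showing that the full nilpotent remainder, including its advection component, stays uniformly bounded independently of $d$.
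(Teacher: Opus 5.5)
Your argument is essentially the paper's proof. The paper also writes the error matrix as $M+\tilde M$, conjugates by $PQ_\varepsilon\otimes I_d$, and arrives at the block-diagonal term $I_m\otimes\mathcal{R}_s(\cdot,\cdot)$ plus an $\varepsilon F\otimes(\cdot)$ remainder. Regrouping $\calA=\gamma I_m+(\calA-\gamma I_m)$ before conjugating gives the same identity with cleaner bookkeeping.

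Your normality bound $\|B_s(h\hat{\mathcal{V}}_1)h\Delta t E\|_2\le\rho$ in the commuting case is a small improvement over the paper. The paper bounds this remainder by $h\Delta t\|B_s\|_2\|E\|_2$, which is uniform in $d$ only when $\hat\rho<1$. In the $\rho$-only regime $h\Delta t\|E\|_2$ may grow like $s$, so the paper's $C_\varepsilon$ would then depend on the mesh.

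One step does not work: for $\gamma>1$ you cannot absorb $\gamma$ into the constant. That factor sits in the block-diagonal part $I_m\otimes\mathcal{R}_s(-hI_d-h\gamma\Delta tD,\gamma h\Delta tE)$, which is not multiplied by $\varepsilon$. It therefore enlarges the contraction factor itself, and no prefactor $C_\varepsilon$ in front of a $k$-th power can compensate a larger base.

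Indeed, after conjugation the iteration matrix is block upper triangular with diagonal blocks $\mathcal{R}_s(\cdot,\gamma h\Delta tE)$. In the commuting case its spectral radius is therefore exactly the mode-wise maximum of $\bigl(R_s(w_j)^2+\gamma^2B_s(w_j)^2h^2\Delta t^2\mu_{\text{adv},j}^2\bigr)^{1/2}$, where $w_j$ are the eigenvalues of $-hI_d-h\gamma\Delta tD$. This is guaranteed to be $\le\rho$ only when $\gamma\le1$; it is not $=\rho$, as your second bullet says.

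So your proof, like the paper's (which silently drops this $\gamma$ in its final bound), establishes the theorem for $\gamma\le1$. That covers SDIRK4 with $\gamma=1/4$. For $\gamma>1$, the $\rho$-version needs $\gamma$ built into its definition. The $\hat\rho$-version can be rescued for $1<\gamma\le e^\eta$ via the sharper bound $h\le\eta/\ell$, which follows from $T_s'(\cosh t)/T_s(\cosh t)=s\tanh(st)/\sinh t\le s^2$.
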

\begin{proof}
    Following the lines of the proof of Theorem \ref{thm:diff}, we obtain
$$
\boldsymbol{x}_{k+1} - \boldsymbol{Y}_{n+1}= (M+\tilde M) (\boldsymbol{x}_{k}- \boldsymbol{Y}_{n+1}).
$$
Here $M=R_{s}(h\mathcal{V}_1) + B_{s}(h\mathcal{V}_1)(h\mathcal{V}_2)$ is exactly the same matrix that appears in Theorem \ref{thm:diff} while the new term $\tilde M=B_{s}(h\mathcal{V}_1)(\mathcal{A}\otimes h\Delta t E)$ arises due to the presence of advection.
Now using the same change of coordinates as in Theorem \ref{thm:diff} we have that 
\begin{align*}
(Q_\varepsilon^{-1}P^{-1}\otimes I_d) (M+\tilde M) (PQ_\varepsilon\otimes I_d) &= I_m \otimes R(h\hat{\mathcal{V}}_1) 
+ \varepsilon F \otimes B(h\hat{\mathcal{V}}_1)h\hat{\mathcal{V}}_2 \\
&+ (\gamma I_m+\varepsilon F)\otimes B(h\hat{\mathcal{V}}_1)h\Delta t E,
\end{align*}
which yields
    \begin{align*} 
\|\boldsymbol{x}_k-\boldsymbol{Y}_{n+1}\|_2 &\leq C_\varepsilon
\Big(
 \|\mathcal{R}_{s}(-hI_d - h\Delta t \gamma D,hE) \|_2 \\
&+\varepsilon\|F\|_2 (\|B_s(h\hat{\mathcal{V}}_1)h\hat{\mathcal{V}}_2\|_2 + h\Delta t\|B_s(h\hat{\mathcal{V}}_1)\|_2\|E\|_2)
\Big)^k \|\boldsymbol{x}_0-\boldsymbol{Y}_{n+1}\|_2.
\end{align*} 
Now proceeding in a similar way as the end of the proof of Theorem \ref{thm:diff},
 we obtain the desired estimate \eqref{eq:boundadv}.
\end{proof}
Note that in the absence of advection ($b=0$), the estimate of Theorem \ref{thm:ad} reduces to the one of Theorem \ref{thm:diff} with $\hat \rho \leq \rho=\alpha_s(\eta)$. 


\begin{figure}[t]
    \centering
    \begin{subfigure}{0.45\linewidth}
        \includegraphics[width=\linewidth]{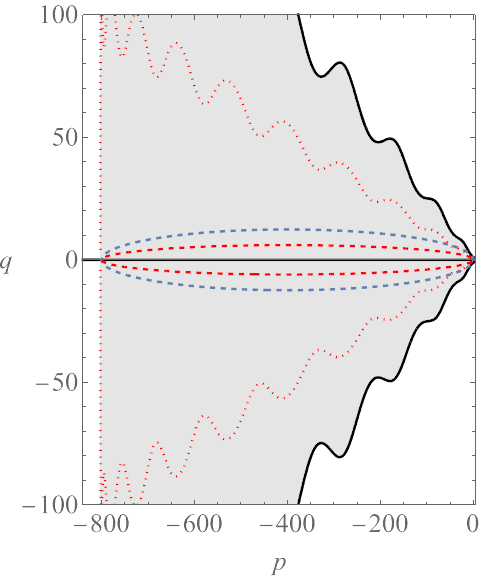}
        \caption{Stability domain}
    \end{subfigure}
    \begin{subfigure}{0.46\linewidth}
            \includegraphics[width=\linewidth]{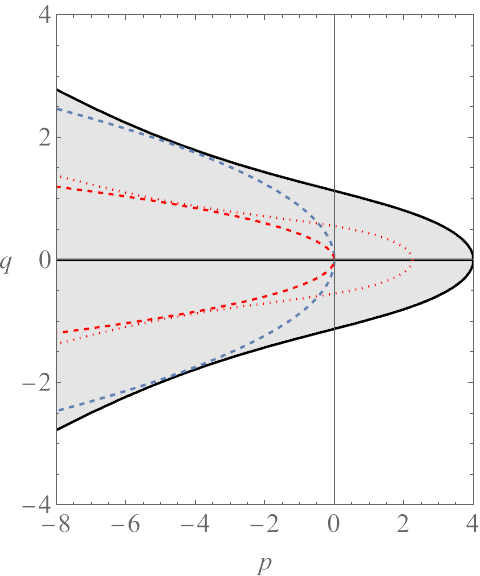}
        \caption{Zoomed version}
    \end{subfigure}
    \caption{{Stability regions $\mathcal{S}_\gamma$ in \eqref{eq:stab_domain} with $\gamma=1/4$ of the diffusion-advection coupling for $s=20$ and damping parameters $\eta=4$. We also include the largest ellipse of the form \eqref{eq:ellipse_pq} tangent to the $y$-axis that can be included in the stability region $\mathcal{S}_\gamma$ in \eqref{eq:stab_domain} (blue dashed lines) where the iteration error decays, and in the stability region $\widehat{\mathcal{S}}_\gamma$ in \eqref{eq:stabdomainmod} where the iteration error is halved at each iteration.}}
    \label{fig:stability1}
\end{figure}
We now plot in Figure \ref{fig:stability1} the stability domain $\mathcal{S}_\gamma$ given by \eqref{eq:stab_domain} with $\gamma=1/4$ for $s=20$ and $\eta=4$. This domain covers a large  portion of the negative real axis as expected from the properties of the Chebyshev polynomials. On the other hand, the width on the $q$-axis near the origin is much narrower.  
{The blue ellipse in Figure \ref{fig:stability1} indicates the largest ellipse of the form \eqref{eq:ellipse_pq} that contains the origin with principal axes parallel to the $p$-$q$ axes and that can be included in the stability domain $\mathcal{S}_\gamma$.  If we denote by $d_s$ and $a_s$  respectively the width and the half-height of this ellipse  we observe that $d_{s}$ grows quadratically with the number of stages $s$ while  $a_{s}$ grows linearly with $s$ as illustrated in Figure \ref{fig:dsas}. More precisely, we find numerically for this range of parameters that $d_s$ and $a_s$ grow like 
\begin{equation} \label{eq:ellipsegrowth}
d_s = 2s^2,\qquad a_s = 0.62s.
\end{equation}
For comparison, we also plot in  in Figure \ref{fig:stability1} in red dotted lines the largest ellipse of the form \eqref{eq:ellipse_pq} that is included in the modified stability domain 
\begin{equation} \label{eq:stabdomainmod}
\widehat{\mathcal{S}}_\gamma = \{(p,q) \in \mathbb{R}^2\ ;\ R_{\text{stab}}(p,q) <1/2 \}
\end{equation}
where the iteration error is at least halved at each iteration using $\hat \rho<1/2$ in \eqref{eq:boundadv}. This is different to blue ellipse corresponding to the stability domain $\mathcal{S}$ in \eqref{eq:stab_domain} where for $\hat \rho<1$ only the decay of the iteration error is guaranteed. This illustrates the favorable stabilization of the explicit stabilised iteration taking advantage of the diffusion term even for advection terms with relatively large Peclet numbers.
}  
{Additionally, note that in the case where ${\rho}<1$ in Theorem \ref{thm:ad} is an appropriate bound for an SDIRK method, the behaviour of the stability function of the implicit Euler studied in Figures \ref{fig:stability1}-\ref{fig:dsas} dictates the choice of parameters, with  the only difference is that} {the stability domain size is scaled with respect to the diffusion {(horizontal $p$-axis in Figure \ref{fig:stability1})} by a factor $\gamma^{-1}$} 
\begin{remark}
{As emphasized in \cite[Remark 3.4]{AV13} for the stability analysis of the PIROCK method for advection-diffusion problems, the linear growth in \eqref{eq:ellipsegrowth} of the ellipse half-height is a feature of the proposed explicit stabilised implementation of SDIRK methods as it allows for relatively large Peclet numbers.
Indeed, considering the advection-diffusion problem \eqref{eq:advdiffexact} with spatial discretization \eqref{eq:adv_diff}.
Substituting $h\Delta t \alpha = 2 s^2$ into $h\Delta t \beta \leq \max(0.62s,1)$  with $h\simeq 1.40$ yields 
$\Delta t \beta \leq \max(\frac{0.62^2}{2\cdot 1.40} \alpha/\beta, 1)$. 
Using $\beta=\norm{b}_{1}\Delta x^{-1}$, $\alpha=4aN\Delta x^{-2}$ we deduce the the stability sufficient condition%
$$\Delta t \leq \max(0.55 aN/\norm{b}_{1}, \Delta x/\norm{b}_{1}),$$
which is a stability constraint on the stepsize $\Delta t$ independent of
the spatial grid size  small enough ($\Delta x \leq 0.55 aN$). This is a feature of the proposed partitioned explicit stabilised implementation, shared with the PIROCK method satisfying an analogous stability sufficient condition (see \cite[Remark 3.4]{AV13}), in contrast to the other stabilised explicit integrators RKC, PRKC and ROCK2 where only small Peclet numbers can be considered.
}
\end{remark}

\begin{figure}[t]
    \centering
    \begin{subfigure}{0.43\linewidth}
        \includegraphics[width=\linewidth, height=4.3cm]{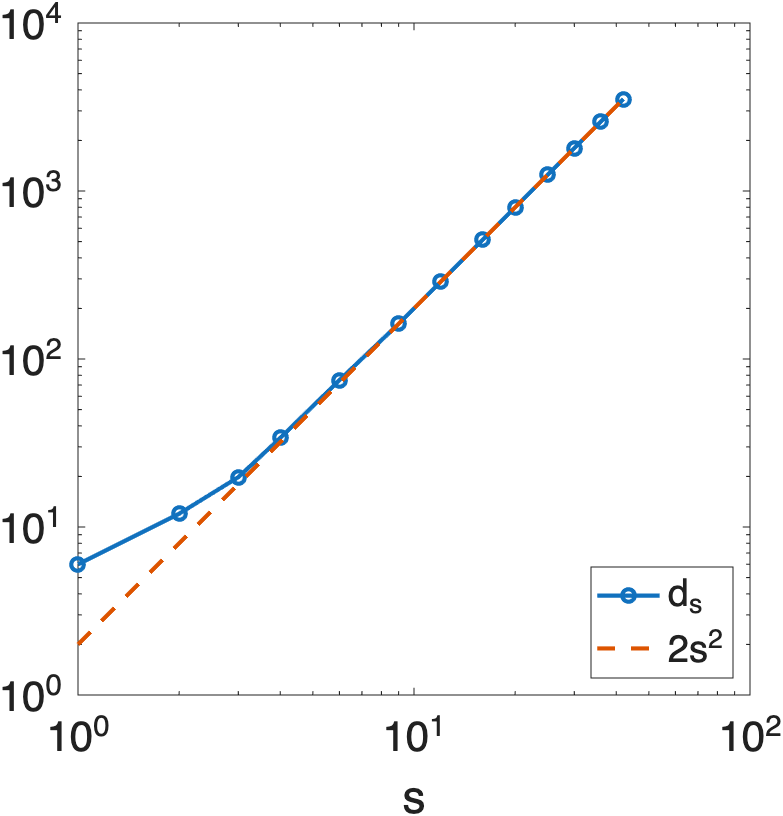}
        \caption{Major axis $d_s$}
    \end{subfigure}
    \begin{subfigure}{0.43\linewidth}
        \includegraphics[width=\linewidth, height=4.3cm]{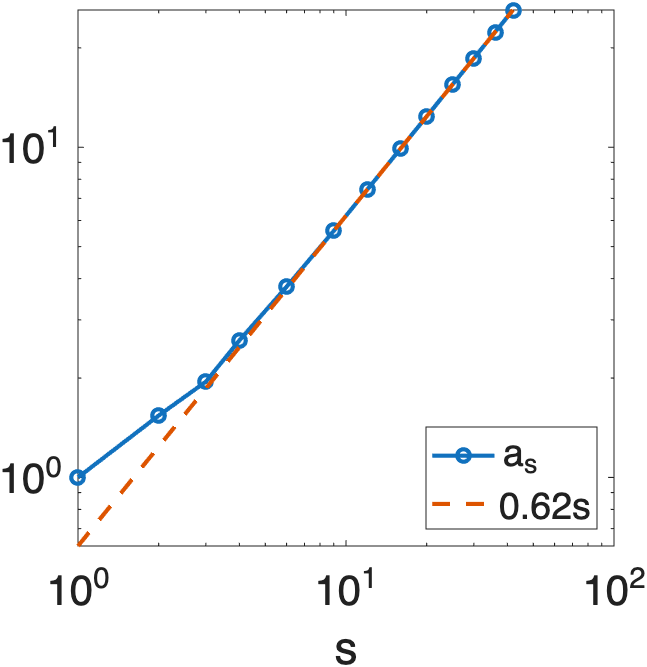}
        \caption{Minor radius $a_s$}
    \end{subfigure}
   
    \caption{{
    Stability of the diffusion-advection coupling for $\gamma=1/4,s=20$ and damping parameters $\eta=4$.
    Growth with respect to the stage parameter $s$ of the width $d_s$ and the half-height $a_s$ in \eqref{eq:ellipsegrowth} of the largest ellipse \eqref{eq:ellipse_pq} tangent to the y-axis that can be included in the stability region $\mathcal{S}_\gamma$ in \eqref{eq:stab_domain} with $\gamma=1/4$.}
    }
    \label{fig:dsas}
\end{figure}

	\section{Numerical tests}
 \label{sec:num}
In this section, we present a number of  numerical experiments on the Brusselator problem for the SDIRK4 method. More precisely, we study the case of a highly stiff reaction as well as a large advection setting.  We denote with exSDIRK4  the proposed explicit stabilised implementation of SDIRK4, with \textsc{Matlab} prototype codes made publicly available at \cite{codesMatlabexSDIRK26}, and we compare its performance with the order 2 PIROCK method. Note that established explicit methods of order $4$ such as ROCK4 \cite{Abd02}, are not suitable for the two problems studied here due to the stiff nature of the advection and reaction terms. In all our numerical experiments, the time step $\Delta t$ is chosen adaptively as described in the remark below. 
\begin{remark} 
Compared to the standard embedded error estimator for stiff problems recalled in Remark \ref{rem:variable_step}, see also \cite[Chap. IV.6]{HaW96}, the only difference for the explicit stabilised implementation is that the inverse Jacobian term $J^{-1}$ involved in \eqref{eq:shampine} is no longer available. This is indeed the main feature of the proposed approach to avoid such a possibly large dimension Jacobian and its inverse, which can raise linear algebra issues. Alternatively to \eqref{eq:shampine}, we implement the Shampine trick using the following error estimator of the exSDIRK4 method,
$$
err={err}_s
$$
where we define $\overline {err}= J^{-1}(y_1-\hat y_1)$ where $J = I_{d} - h\, \Delta t \gamma F'_R(y_1)$ is the Jacobian involved for the reaction term, and the ${err}_j$ are computed by induction on $j\in \{1,\cdots,s\}$ using ${err}_0 = {err}_{-1} = 0$ as
$${err}_j = \mu_j \left(h \Delta t \gamma (F_D(y_1+{err}_{j-1}) - F_D(y_1))+ \overline{err}\right)+
\nu_j {err}_{j-1}-(\nu_j-1) {err}_{j-2},$$ 
where $\mu_1 = \omega_1/\omega_0, \nu_1=1$ and $\mu_j,\nu_j$ and the stage number $s$ are defined in Algorithm \ref{alg:3} (see \eqref{eq:rkcd_iter}).
We emphasize that the above error estimator benefits from $L$-stability with respect to diffusion and reaction terms in the spirit of the the standard estimator \eqref{eq:shampine}. Indeed, firstly in the absence of diffusion and reaction ($F_D=F_A=0$) it reduces to $err=\overline{err}=J^{-1}(y_1-\hat y_1)$ which exactly coincides with \eqref{eq:shampine}, and secondly in the case of a linear diffusion $F_D=Dy$, it yields $err_s=B_s(h\Delta t \gamma D) J^{-1} (y_1-\hat y_1)$ analogously to \eqref{eq:it_rkcd1} and where we recall the bound $|B_s(z)|\leq C/|z|$.
\end{remark}


Throughout the numerical experiments we make the following implementation assumption: the evaluations of $F_D[\boldsymbol Y]$, $F_A[\boldsymbol Y]$, and $F_R[\boldsymbol Y]$ for the $m$ internal
stages, are computed in \emph{parallel}. This is natural given the
block-diagonal structure of the stabilisation operators $R_s(h\mathcal{V}_1)$ and
$B_s(h\mathcal{V}_1)$, which decouple across stages, and reflects the remark following
Theorem~\ref{thm:diff}. For a vector $v\in R^d$ that represents a discretized function $f:\R^d\rightarrow\R^d$, we define the discrete $L^2-$norm by
\[\|v\|_{L^2} = \sqrt{\frac{\sum_{i=1}^{d}v_i^2}d}.\]
\begin{remark}
    As we mentioned before, for simplicity, we use $\mathbbm{1} \otimes y_n$ as initial guess in Algorithm \ref{alg:3} and, to coincide with the true solution of SDIRK4, we choose a fixed $tol_{it} = 10^{-12}$ as optimisation tolerance. However, the cost of exSDIRK4 discussed in the numerical tests below can be further reduced by choosing a better initial guess using interpolation or dense output techniques, and by choosing $tol_{it}$ smaller than the relative tolerance by only one or two orders of magnitude.
\end{remark}
	\subsection{1D Brusselator problem with highly stiff reaction}
    
	\begin{table}[b!]
		\centering
		\begin{tabular}{|c|c|c|c|c|c|c|}
			\hline
			Method & $tol$   &  $F_D$ evals & $F_R$ evals &  $F'_R$ evals &  acc(rej) & error \\
			\hline
			exSDIRK4 & $10^{-1}$ & $4447$ & $245$ & $40$ & $20(0)$ & $3.5\times 10^{-4}$\\
			PIROCK & $10^{-1}$ & $548$ & $63$ & $12$ & $12(0)$ & $2\times 10^{-2}$ \\
			\hline
			exSDIRK4 & $10^{-3}$ & $5072$ & $559$ & $77$ & $42(3)$ &$2.0\times 10^{-5}$ \\
			PIROCK & $10^{-3}$ & $870$ & $123$ & $24$ & $24(0)$ & $3.3\times 10^{-4}$ \\
			\hline
			exSDIRK4 & $10^{-5}$ & $7718$ & $1307$ & $183$ & $102(3)$ & $1.4\times10^{-6}$ \\
			PIROCK & $10^{-5}$ & $5719$ & $6205$ & $1239$ & $1234(6)$ & $9.9\times10^{-6}$\\
			\hline
			exSDIRK4 & $10^{-7}$ & $13463$ & $3053$ & $439$ & $231(4)$ & $1.7\times10^{-8}$\\
			PIROCK & $10^{-7}$ & $41608$ & $54967$ & $10992$ & $10987(6)$ & $1.4\times10^{-7}$\\
			\hline
			exSDIRK4 & $10^{-9}$ & $23056$ & $6248$ & $915$ & $468(5)$ & $2.2\times10^{-10}$\\
			PIROCK & $10^{-9}$ & $191700$ & $279447$ & $55888$ & $55888(2928)$ & $1.0\times10^{-9}$\\
			\hline
		\end{tabular}
		\captionof{table}{{Comparison between our explicit stabilised implementation of SDIRK4 and PIROCK for different} {prescribed tolerances.} }
		\label{table:bruss}
	\end{table}
	Consider the following Brusselator problem
	\begin{equation}\label{eq:bruss1d}
		\begin{split}
		\partial_t u &= \alpha\Delta u + A +u^2v - (B+1)u,\\
		\partial_t v &= \alpha\Delta v -u^2v + Bu,\\
        u(0,t)&=u(1,t), \quad v(0,t)=v(1,t),
		\end{split}
	\end{equation}
	where $x\in (0,1)$, $t\in (0,T)$, $\alpha=0.2$, $A=1$, $B=3\times 10^7$, and $T=1$. We consider the following initial condition
	$$
	u(x,0)=1+\sin(2\pi x) \quad \text{and} \quad v(x,0)=3, 
	$$
    and periodic boundary conditions.
	We use second order finite differences for the space discretisation with mesh size $\Delta x =1/200$, and the starting time step $\Delta t_0=10^{-6}$. 

    \begin{figure}[htbp]
    \centering
    \begin{subfigure}[b]{0.325\textwidth}
        \centering
        \includegraphics[width=\textwidth]{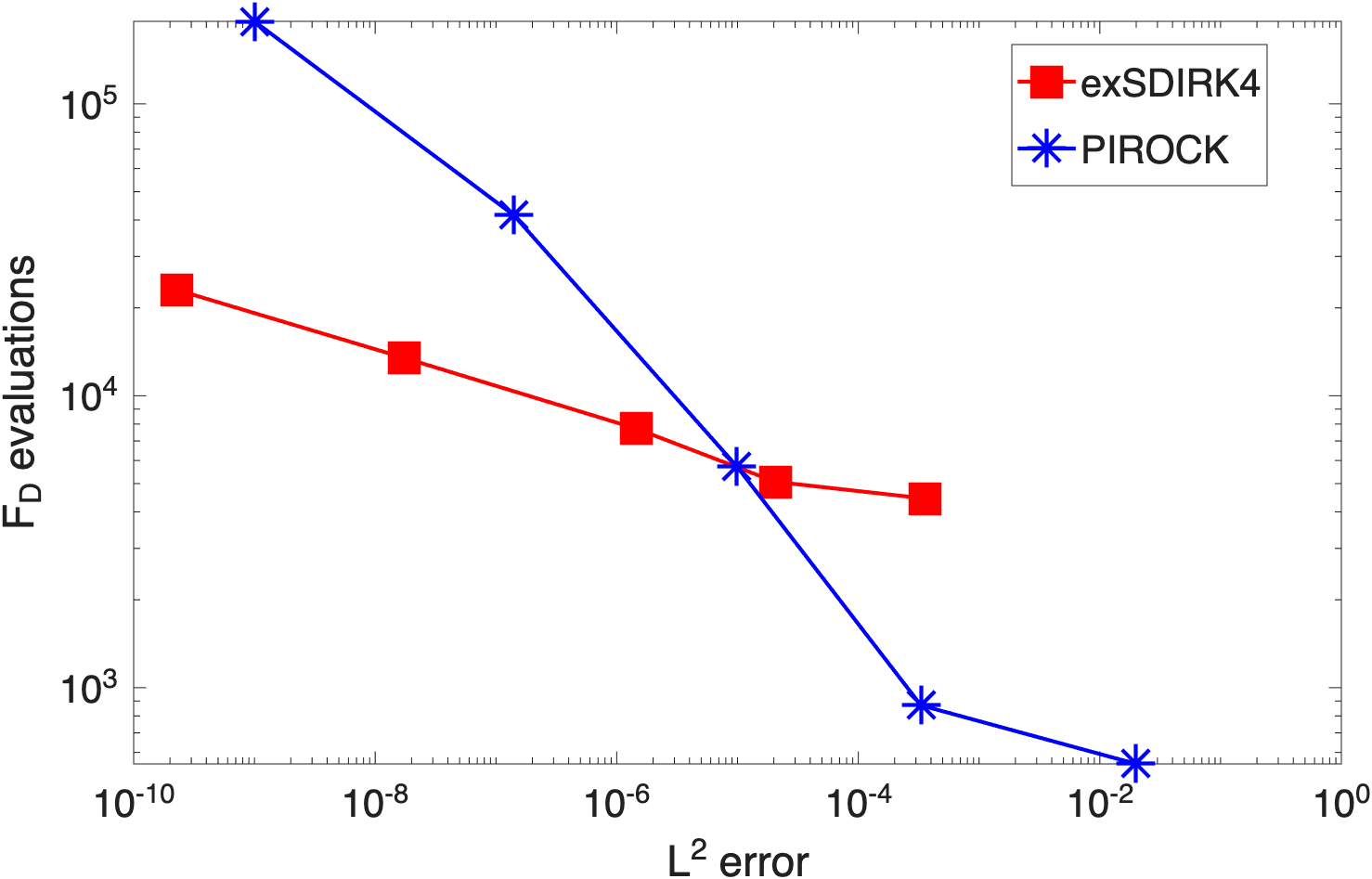}
        \caption{$F_D$ evaluations.}
    \end{subfigure}
    \hfill
    \begin{subfigure}[b]{0.325\textwidth}
        \centering
        \includegraphics[width=\textwidth]{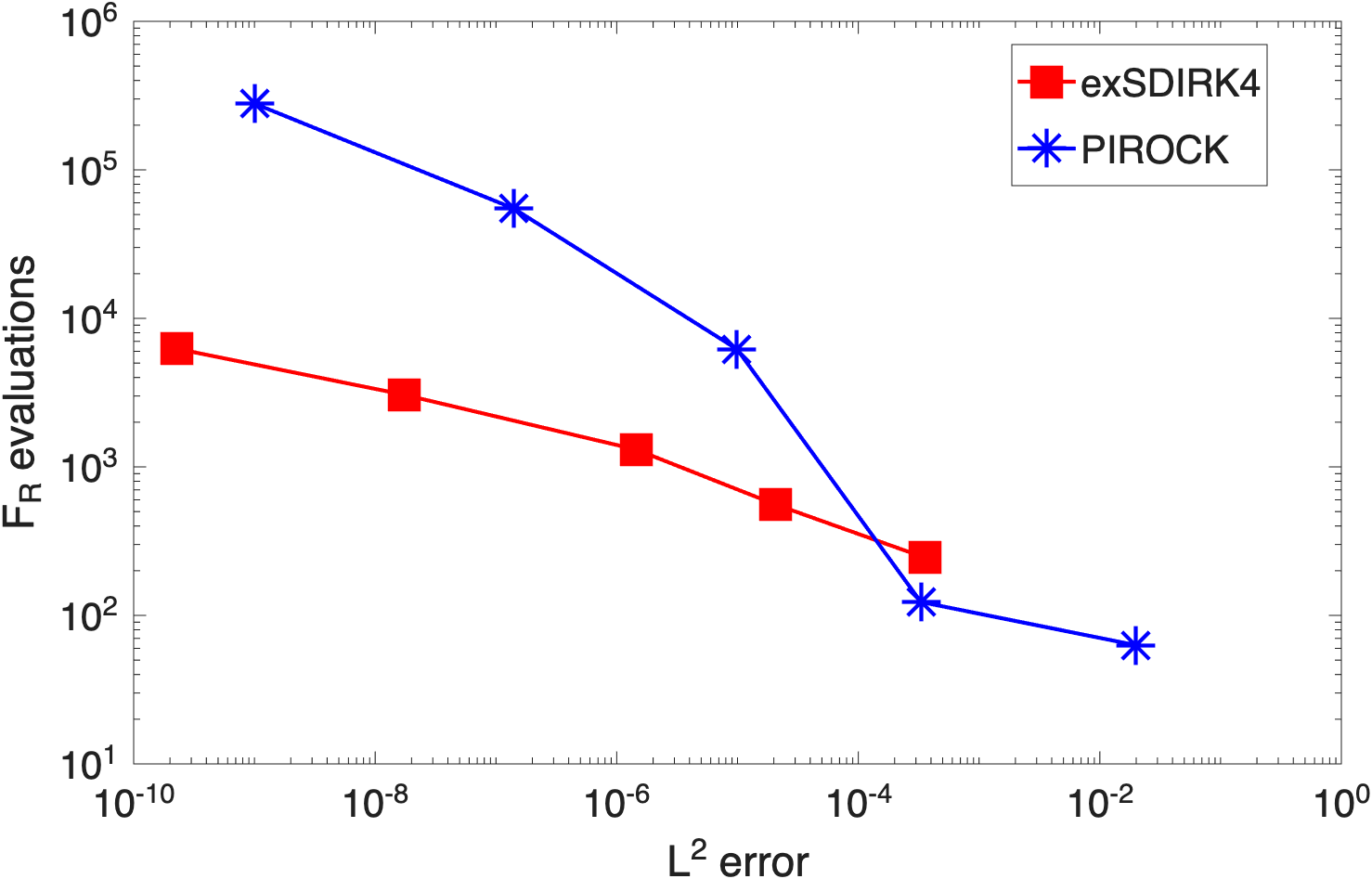}
        \caption{$F_R$ evaluations.}
    \end{subfigure}
    \hfill
    \begin{subfigure}[b]{0.325\textwidth}
        \centering
        \includegraphics[width=\textwidth]{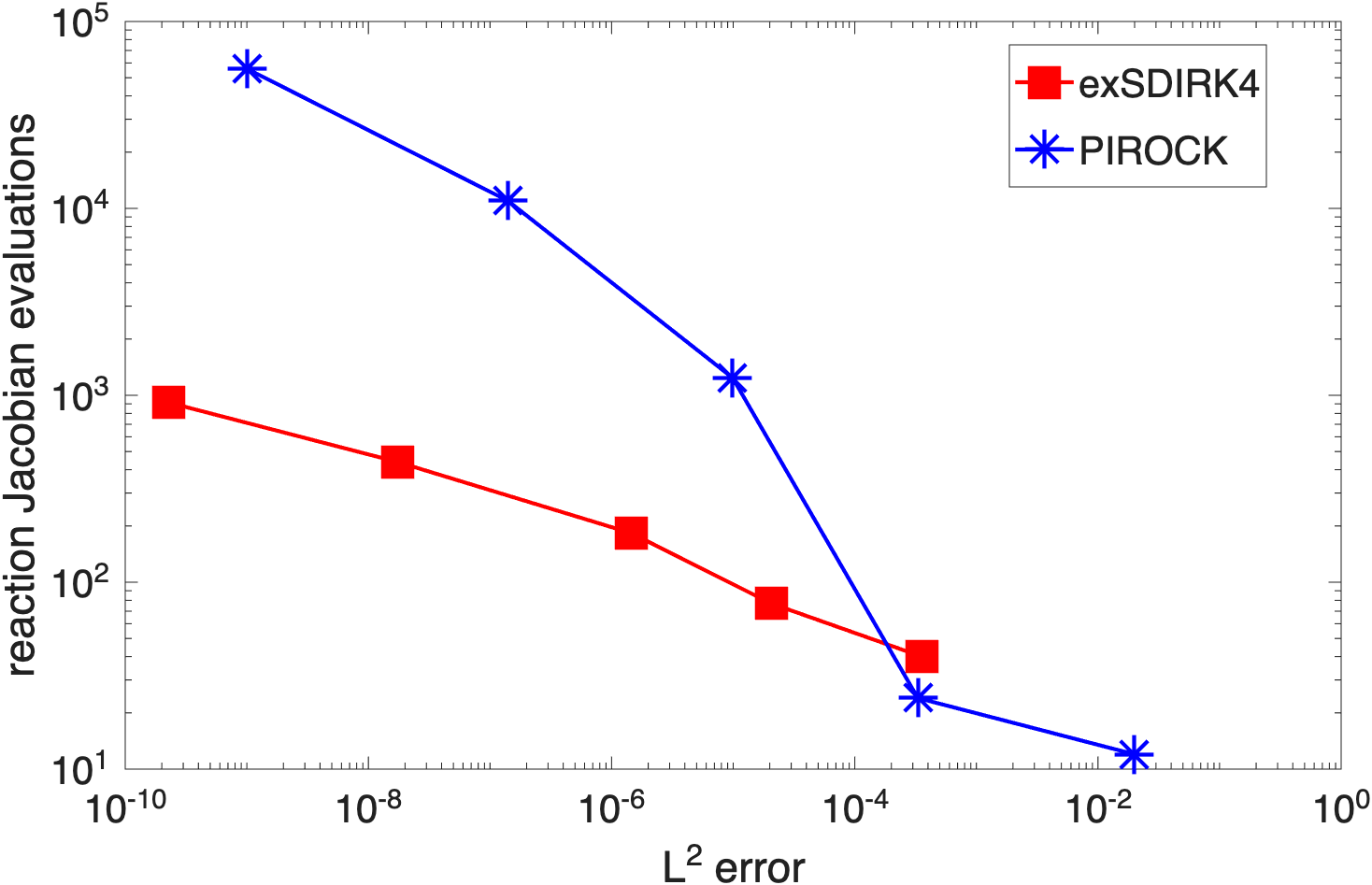}
        \caption{{$F_R'$ evaluations}}
    \end{subfigure}

    \caption{Cost comparison between exSDIRK4 and PIROCK for Problem \eqref{eq:bruss1d}}
    \label{fig:bruss1d}
\end{figure}

Table~\ref{table:bruss} and Figure~\ref{fig:bruss1d} report the cost comparison between exSDIRK4 and PIROCK for the 1D Brusselator problem {\eqref{eq:bruss1d}}. For loose tolerances ($tol = 10^{-1}$ and $10^{-3}$), PIROCK requires fewer function evaluations overall; however, exSDIRK4 already delivers a substantially lower error for the same requested tolerance, reflecting its higher order of accuracy. As the tolerance tightens, the balance shifts decisively in favour of exSDIRK4. At $tol = 10^{-5}$, both methods have a comparable number of $F_D$ evaluations, but exSDIRK4 requires more than $4$ times fewer $F_R$ evaluations and $6$ times fewer $F'_R$ {evaluations/inversions}. This advantage grows rapidly: at $tol = 10^{-7}$, PIROCK needs approximately $3$ times more $F_D$, $18$ times more $F_R$, and $24$ times more $F'_R$ {evaluations/inversions} than exSDIRK4. At the tightest tolerance $tol = 10^{-9}$, PIROCK accumulates 55,888 accepted steps with 2,928 rejections, while exSDIRK4 completes the integration in only 468 accepted steps with 5 rejections—a reduction of more than two orders of magnitude in the number of steps. These trends are clearly visible in Figure~\ref{fig:bruss1d}, which displays the number of $F_D$, $F_R$, and $F_R'$ evaluations as a function of the tolerance. The work-precision curves show that the crossover point beyond which exSDIRK4 is uniformly cheaper occurs around $tol \approx 10^{-4}$, and the gap widens as the tolerance decreases, which is consistent with the high-order convergence of exSDIRK4.

\subsection{2D Brusselator with large advection}
{We now consider the following Brusselator model in 2D:}
\begin{equation}\label{eq:bruss2d}
		\begin{split}
		\partial_t u &= \nu\Delta u + \mu U\cdot \nabla u + A +u^2v - (B+1)u,\\
		\partial_t v &= \nu\Delta v + \mu V\cdot \nabla v -u^2v + Bu,\\
		\end{split}
	\end{equation}
	where $x\in (0,1)^2$, $t\in (0,T)$, $\nu=0.1$, $\mu=2$ $A=1.3$, $B=10^7$, $U=(-0.5,1)^T$, $V=(0.4,0.7)^T$ and $T=0.5$. Following \cite{AV13}, we choose
	$
	u(x,0)=22x_2(1-x_2)^{3/2} , \quad v(x,0)=27x_1(1-x_1)^{3/2}, 
	$
	and periodic boundary conditions. For the space discretisation, we use again second order finite differences with mesh size equals to $10^{-2}$ in both dimensions and the starting time step $\Delta t_0=10^{-6}$.

\begin{figure}[htbp]
    \centering
    \begin{subfigure}[b]{0.325\textwidth}
        \centering
        \includegraphics[width=\textwidth]{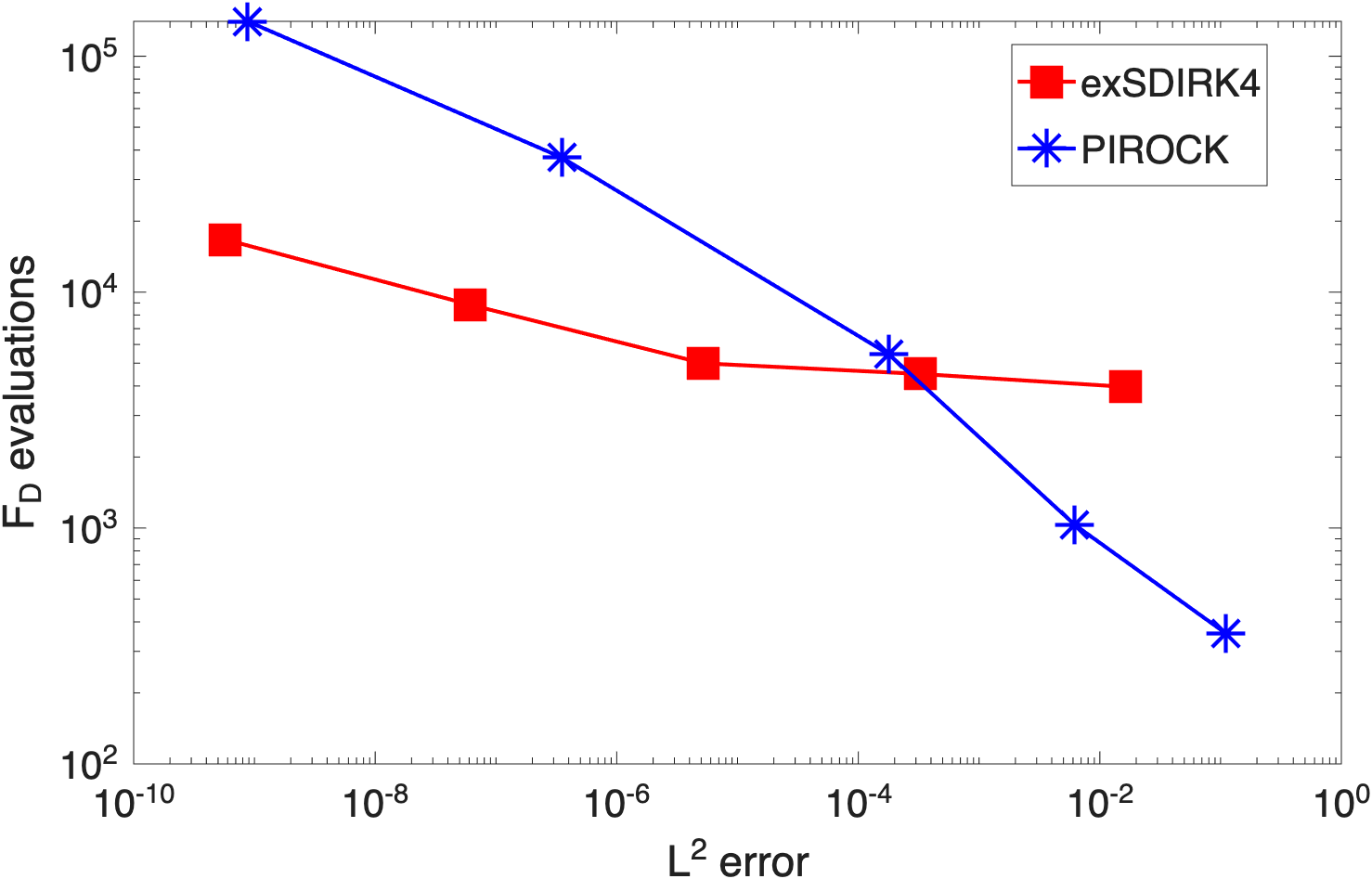}
        \caption{$F_D$ evaluations.}
    \end{subfigure}
    \hfill
    \begin{subfigure}[b]{0.325\textwidth}
        \centering
        \includegraphics[width=\textwidth]{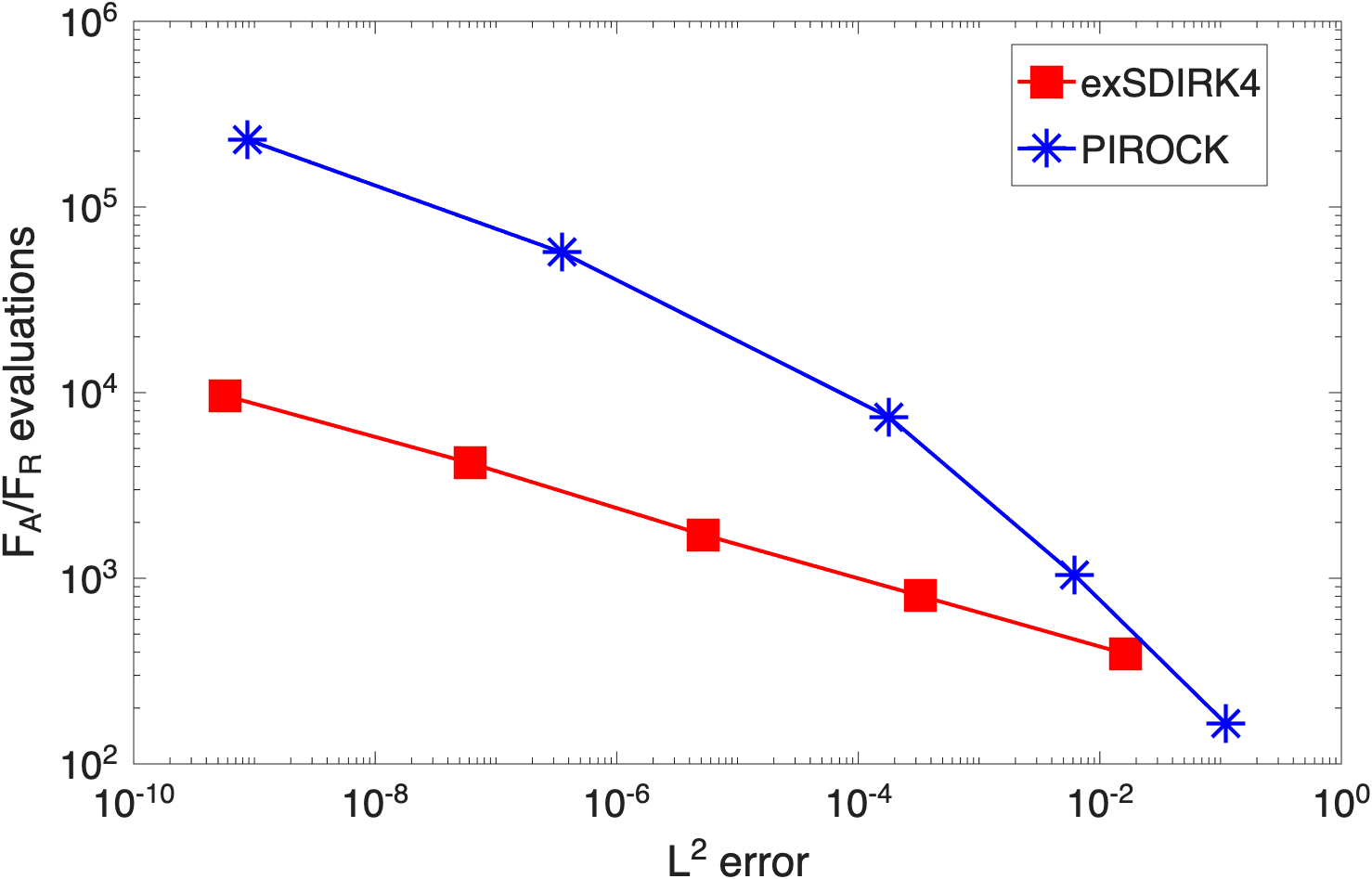}
        \caption{$F_A$ and $F_R$ evaluations.}
    \end{subfigure}
    \hfill
    \begin{subfigure}[b]{0.325\textwidth}
        \centering
        \includegraphics[width=\textwidth]{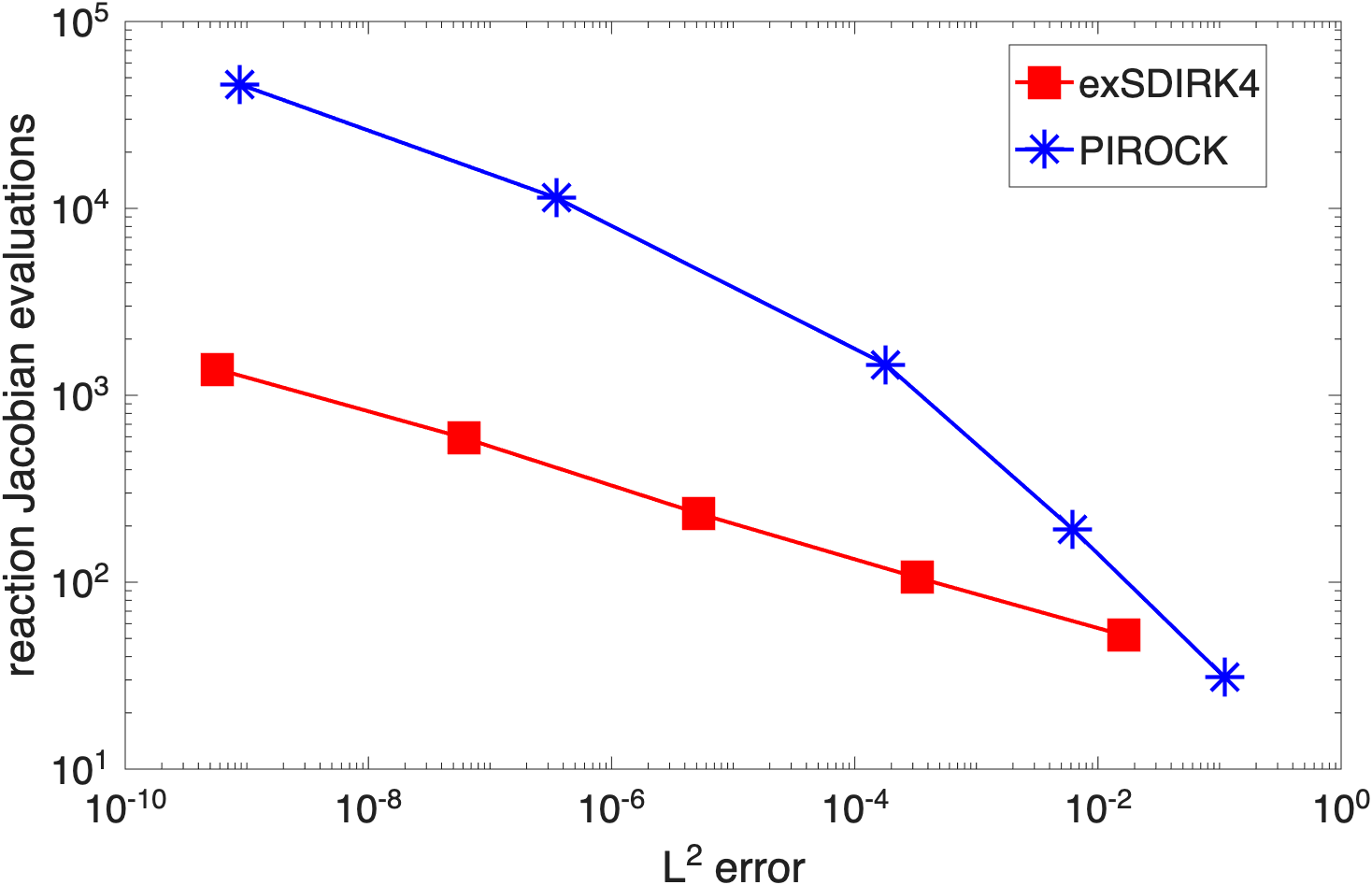}
        \caption{{$F_R'$ evaluations.}}
    \end{subfigure}
    \caption{Cost comparison between exSDIRK4 and PIROCK for Problem \eqref{eq:bruss2d}}
    \label{fig:bruss2d}
\end{figure}

Figure~\ref{fig:bruss2d} reports analogous results for the 2D Brusselator problem with large advection. The qualitative behaviour is very similar to the 1D case: exSDIRK4 is outperformed by PIROCK for loose tolerances in terms of raw function evaluation counts, but becomes significantly more efficient as the tolerance tightens. The three panels—$F_D$ evaluations, $F_A$/$F_R$ evaluations, and $F_R'$ evaluations—all exhibit the same crossover pattern observed in Figure~\ref{fig:bruss1d}, confirming that the efficiency advantage of exSDIRK4 is not an artefact of the one-dimensional setting but carries over to higher-dimensional problems with combined diffusion, advection, and stiff reaction terms.

\section{Conclusion}


We introduced an explicit stabilised implementation of singly diagonally implicit
Runge-Kutta (SDIRK) methods for advection-diffusion-reaction PDEs, with the fourth-order
method SDIRK4 as the primary example. The central idea is to reformulate the implicit
Runge-Kutta stage equations \eqref{eq:nonlinearimp} as the steady state of an auxiliary
ODE \eqref{eq:gf1}, and to compute that steady state by a partitioned
Runge-Kutta-Chebyshev iteration (Algorithm~\ref{alg:3}) inspired by the RKCD
optimisation method of \cite{EVVZ21}. The partition of the Runge-Kutta coefficient matrix
$\calA = \gamma I_m + (\calA - \gamma I_m)$ is essential: it isolates a symmetric,
block-diagonal stiff part $G_D$ that governs the stability of the iteration through the
stability polynomial $R_s$, avoiding the non-diagonalisable amplification matrix that
would otherwise prevent convergence, as analysed in detail in Section~\ref{appendix:naive}.
The off-diagonal diffusion and advection contributions enter through the damping polynomial
$B_s$, whose extended stability along the imaginary axis accommodates large advection
velocities under a mild CFL-like condition on the iteration step size $h$ — significantly
more permissive than the classical CFL restriction on the physical time step $\Delta t$.
The stiff reaction term is handled implicitly at each inner iteration via a local inversion
of $J = I_{md} - hG'_R(\boldsymbol{x}_k)$, which remains computationally inexpensive
because the reaction Jacobian carries no spatial coupling between grid points and is
therefore block-diagonal. The result is a method that requires no large-scale Newton solves
or global linear algebra, and yet achieves the high-order accuracy and strong stability of
a fourth-order $L$-stable integrator. The only implicit component is this local reaction
Jacobian inversion, in sharp contrast to the dense, high-dimensional Jacobian systems that
a standard Newton-based implementation of SDIRK4 would require.

The convergence and stability properties of the new method are rigorously established in
two settings. In the pure linear diffusion case (Theorem~\ref{thm:diff}), we prove that
Algorithm~\ref{alg:3} converges with {decay factor} $\alpha_s(\eta)+\varepsilon$ for any
$\varepsilon>0$, independently of the spatial dimension $d$, and that implementing one
step of an $m$-stage SDIRK method requires only
$\mathcal{O}(\sqrt{\Delta t}/\Delta x)$ evaluations of $F_D$ — the same asymptotic cost
as a standard first-order explicit stabilised method. In the linear advection-diffusion
case (Theorem~\ref{thm:ad} and Propositions~\ref{prop:rate1}--\ref{prop:rate2}), we show
that convergence is maintained provided the advection-to-diffusion ratio satisfies a
mild CFL-like condition, a condition that is significantly
more permissive than the one imposed by a classical explicit scheme on the physical time
step $\Delta t$.

Assuming parallel implementation of the function evaluations across the method stages,
the numerical experiments on the 1D and 2D Brusselator problems with {highly stiff
reaction  confirms} the theoretical predictions and demonstrate a clear
advantage over the second-order method PIROCK \cite{AV13} whenever tight tolerances are
required. The method occupies a natural middle ground: more implicit than a pure explicit
stabilised method such as PIROCK, but far cheaper than a fully Newton-based implicit
solver, and this balance is precisely what drives its efficiency at high accuracy demands.

The present framework relies on the lower-triangular, singly diagonal structure of SDIRK
methods to decouple the implicit system stage by stage and to identify the symmetric
stabilising part $G_D$. A natural and challenging next step would be to extend the approach to
fully implicit Runge-Kutta methods such as Radau IIA or Gauss-Legendre collocation
schemes. 
{A major difficulty for their implementation is their full matrix structure of the Runge-Kutta coefficients $\calA$, and studied for instance in \cite{IM98,BIM15} where splitting approaches based on  triangular matrices and low rank linear algebra techniques are proposed. 
In such fully implicit case,} the
Runge-Kutta matrix $\calA$ is a full, non-triangular matrix with non-real eigenvalues,
so the partitioning strategy proposed in this paper and the associated convergence analysis do not apply straightforwardly.

\paragraph{Acknowledgments} 
The authors would like to thank Ernst Hairer and Francesca Mazzia for helpful discussions on the implementation of SDIRK and RADAU methods. G.V. was partially supported by the Swiss National Science
Foundation, projects No. 200020\_214819, No. 200020\_192129 and No. 10009199. I.A. was partially supported by the Swiss National Science Foundation, projects No. P500PT\_210968 and No. P5R5PT\_222213.

   \bibliographystyle{abbrv}
	\bibliography{refs}

\appendix

\section{Naive implementation}
\label{appendix:naive}
We now explain in more detail the issues observed in Figure \ref{fig:iterations} with respect to the naive stabilised explicit implementation of SDIRK4 applied without partitioning {(corresponding to Algorithm \ref{alg:3} with 
the definition of $G(\boldsymbol{x})$ replaced by $G(\boldsymbol{x})=G_D(\boldsymbol{x})+G_A(\boldsymbol{x})$ with $G_R(\boldsymbol{x})=0$, or equivalently applying Algorithm \ref{alg:rkcd} to equation \eqref{eq:gff} with $F_A=F_R=0$)}. 
Considering for simplicity only one mode of the discrete linear heat equation, corresponding to the Dahlquist test equation $\dot y =\lambda y$ 
with $\lambda=-\Delta t \gamma \lambda_{\text{heat},j}<0$ corresponds to a possibly large parameter $|\lambda|\gg 1$.
We then write one-step recursion induced by Algorithm \ref{alg:rkc} applied to a SDIRK method with $m>1$ internal stages, namely  
\[
\mathbf{Y}_{n+1}=R_{s}(-hI_m+ \lambda h\mathcal{A})\mathbf{Y}_{n}
\]
where $R_{s}$  given by \eqref{eq:stabcheb} is the stability function of RKC, and $h$ is the time step used by the RKC iteration. 
Consider the Jordan form of the matrix $\mathcal{A}=P\mathcal{J}P^{-1}$ where
$\mathcal{J}$ is a Jordan block defined in \eqref{eq:defJ}.
Using the following formula for any analytic function $f$ applied to the Jordan matrix $J$,
        $$
f(\calJ)=\begin{pmatrix}f(\gamma)&f^\prime(\gamma)&\cdots&\frac{f^{(m-1)}(\gamma)}{(m-1)!}\\&f(\gamma)&\ddots&\vdots\\&&\ddots&f^\prime(\gamma)\\0&&&f(\gamma)\end{pmatrix},
        $$
        we deduce taking $f(\calJ)=R(-hI_m+\gamma z \calJ)$ with $z=h\lambda$ and $\hat z=-h+\gamma z$,
$$
P^{-1}R_s(-hI_m +\lambda h \mathcal{A})P = \begin{pmatrix}
R_s(\hat z)&R_s^\prime(\hat z) \gamma z&\cdots& \frac{R_s^{(m-1)}(\hat z)(\gamma z)^{m-1}}{(m-1)!} \\&R_s(\hat z)&\ddots&\vdots\\&&\ddots&R_s^\prime(\hat z) \gamma z\\0&&&R_s(\hat z)
\end{pmatrix}
$$
and it turns out that the off-diagonal terms $\frac{R_s^{(j)}(\hat z)(\gamma z)^{j}}{j!}, 1\leq j\leq m-1$ are arbitrarily large for large $|z|$ for stiff problems where $|\lambda| \gg 1$. 
This is due to the $z^j$ factors, growing like $\bigo(s^{2j})$, and can be shown that the 
derivatives $R_s^{(j)}$ of the stability function oscillate with an amplitude of size $\bigo(s^{-j})$ which is not sufficient to damp the $z^j$ terms.
We obtain  
$$\sup_{s\geq 1, \hat z\in [-L_{s,\eta},0]} \|R_s(-hI_m+z \mathcal A)\|=+\infty$$
and this makes the convergence factor of the naive implementation without partitioning non-uniformly bounded 
as the stiffness increases when refining the spatial mesh ($\Delta x\rightarrow 0$), as illustrate in Figure \ref{fig:iterations}.
Note that in the proof of the main Theorem \ref{thm:diff}, we used the same Jordan decomposition of the Runge-Kutta matrix
$
\calA = P\mathcal{J}P^{-1}
$, with the main difference in the design of the proposed method with partitioning and its stability analysis that the stability function $R_s$ of the explicit stabilised Chebyshev method is always applied to a diagonalizable matrix.

	\end{document}